\documentclass[12pt]{amsart}
\usepackage[letterpaper,lmargin=1.5in,height=8.3in,width=6in,top=1.5in]{geometry}
\usepackage{amsmath}
\usepackage{amsthm}
\usepackage{amssymb}
\bibliographystyle{hep}

\newcommand{\NN}{\mathbb{N}}

\newcommand{\RR}{\mathbb{R}}

\newcommand{\mb}[1]{\mathbf{#1}}
\newcommand{\mc}[1]{\mathcal{#1}}

\newcommand{\Graph}{\mathbf{Graph}}
\newcommand{\Gr}{\mathbf{Gr}}
\newcommand{\Aut}{\mathrm{Aut}}
\newcommand{\GG}{\mathcal{G}}
\newcommand{\LL}{\mathcal{L}}
\newcommand{\gr}{\mathrm{gr}}
\newcommand{\Cay}{\mathrm{Cay}}

\newcommand{\e}{\varepsilon}

\newtheorem{theo}{Theorem}[section]
\newtheorem{lem}[theo]{Lemma}
\newtheorem{prop}[theo]{Proposition}
\newtheorem{cor}[theo]{Corollary}

\theoremstyle{definition}
\newtheorem{defn}[theo]{Definition}
\newtheorem{oq}[theo]{Open Question}

\begin{document}
\title{Graphings and unimodularity}
\author[I. Artemenko]{Igor Artemenko}
\address{Department of Mathematics and Statistics, University of Ottawa, 585 King Edward Ave., Ottawa, Ontario, Canada K1N 6N5}

\begin{abstract}
We extend the concept of the law of a finite graph to graphings, which are, in general, infinite graphs whose vertices are equipped with the structure of a probability space. By doing this, we obtain a vast array of new unimodular measures. Furthermore, we work out in full detail a proof of a known result, which states that weak limits preserve unimodularity.
\end{abstract}

\maketitle

%
%
\section*{Introduction}

This article looks at graphs from the viewpoint of probability theory by defining measures on the space of rooted graphs. We are concerned with approximating such measures using finite graphs. More precisely, every finite graph $G$ gives rise to a probability measure known as the law of $G$, and approximations are done by means of weak convergence of sequences of laws.

Unimodularity is a property of probability measures, which is known to be preserved under weak limits. Although this result has been stated by Aldous and Lyons \cite{pourn}, Schramm \cite{hgl}, and Elek \cite{otlolggs}, we begin the article by giving a detailed argument. Following that, we expose an abundant source of examples of unimodular measures using graphings, which are graphs whose vertices support the structure of a probability space.

There are several important open questions that are related to unimodularity. The primary question, brought up by David Aldous and Russell Lyons \cite{pourn}, is whether every unimodular measure can be approximated by laws of finite graphs. This problem can be decomposed into the following questions:
\begin{enumerate}
\item[(i)]
Can the law of a graphing be approximated by laws of finite graphs?
\item[(ii)]
Is every unimodular measure the law of a graphing?
\end{enumerate}

Many of the concepts are introduced without examples, and the reader is encouraged to see this author's previous work \cite{wcolofg} for a more thorough treatment of the basics. However, note that the notation used here is different.

G\'abor Elek discusses some of the material in this article as well \cite{nolofg,otlolggs}, but proceeds in a slightly different direction. In fact, the notation we use mimics his.

To be consistent, note the following set of guidelines regarding notation and convention. All graphs are assumed to be simple and undirected. Throughout the article, assume that $X$ is a compact metric space. Denote by $\mc{M}(X)$ the set of probability measures on $X$, by $\mb{C}(X)$ the set of continuous real-valued functions on $X$, and by $\mc{B}(X)$ the Borel $\sigma$-algebra on $X$. From now on, the reader may assume that all of our measures are probability measures.

If $d$ is a metric on $X$, then $B_d(x,r) = \{y \in X ~:~ d(x,y) \leq r\}$ is the ball around $x$ of radius $r$. The set $\gr(f) = \{(x,f(x)) ~:~ x \in X\}$ is the graph of a function $f : X \to Y$.

\subsection*{Acknowledgements}
This article is based on the research done in the Summer of 2011 under the supervision of Dr. Vladimir Pestov, funded by the NSERC USRA. Special thanks go out to Dr. Vadim Kaimanovich for the fruitful discussions.
%
%
\section{Measures and metrics}

We begin by introducing the basic concepts that are used throughout this article.

\begin{defn}
A sequence $(\mu_n)_{n=1}^\infty$ of measures on $X$ \emph{converges weakly} to some $\mu \in \mc{M}(X)$ if
\[
\int f ~d\mu_n \to \int f ~d\mu
\]
for all $f \in \mb{C}(X)$. The measure $\mu$ is known as the \emph{weak limit} of the given sequence.

If $f : X \to Y$ is a measurable function between the measure spaces $X$ and $Y$, the \emph{pushforward of $\mu$} is the measure $f_\ast(\mu)$ on $Y$ defined by
\[
f_\ast(\mu)(B) = \mu(f^{-1}(B))
\]
for all measurable subsets $B$ of $Y$.
\end{defn}

\begin{prop}\label{pushforward_is_continuous}
Let $X$ and $Y$ be compact metric spaces. Suppose that $f : X \to Y$ is a continuous function. If $(\mu_n)_{n=1}^\infty$ is a sequence of measures on $X$ that converges weakly to some $\mu \in \mc{M}(X)$, then $(f_\ast(\mu_n))_{n=1}^\infty$ converges weakly to $f_\ast(\mu)$.
\end{prop}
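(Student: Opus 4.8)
The plan is to reduce the statement to the defining property of weak convergence on $X$ via the elementary change-of-variables identity for pushforward measures. First I would fix an arbitrary $g \in \mb{C}(Y)$; the goal is to show $\int g ~d(f_\ast(\mu_n)) \to \int g ~d(f_\ast(\mu))$. Since $f$ is continuous, it is Borel measurable, so $f^{-1}(B) \in \mc{B}(X)$ for every $B \in \mc{B}(Y)$ and the pushforwards $f_\ast(\mu_n)$ and $f_\ast(\mu)$ are genuine (probability) measures on $Y$; this is the only place measurability of $f$ is used, and continuity gives it for free.

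The key step is the identity $\int_Y g ~d(f_\ast(\nu)) = \int_X (g \circ f) ~d\nu$ for any $\nu \in \mc{M}(X)$ and any bounded Borel $g : Y \to \RR$. I would establish this in the standard way: it holds by definition of $f_\ast(\nu)$ when $g = \mathbf{1}_B$ is an indicator of a Borel set, extends to nonnegative simple functions by linearity, then to nonnegative Borel $g$ by monotone convergence, and finally to all bounded Borel $g$ by splitting into positive and negative parts. Applying this with $\nu = \mu_n$ and with $\nu = \mu$ turns the desired convergence into $\int_X (g \circ f) ~d\mu_n \to \int_X (g \circ f) ~d\mu$.

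To finish, I would observe that $g \circ f$ is a composition of continuous functions and hence $g \circ f \in \mb{C}(X)$ (here we genuinely use continuity of $f$, not merely measurability, because weak convergence only tests against continuous functions). The hypothesis that $(\mu_n)$ converges weakly to $\mu$ then yields $\int (g \circ f) ~d\mu_n \to \int (g \circ f) ~d\mu$ directly. Chaining the two identities with this limit gives $\int g ~d(f_\ast(\mu_n)) \to \int g ~d(f_\ast(\mu))$, and since $g \in \mb{C}(Y)$ was arbitrary, $(f_\ast(\mu_n))_{n=1}^\infty$ converges weakly to $f_\ast(\mu)$.

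I do not anticipate a serious obstacle here: the argument is essentially bookkeeping. The one point that deserves care, and which I would state explicitly rather than gloss over, is the change-of-variables identity above, since everything else follows formally once that is in hand; a secondary point worth a sentence is that continuity of $f$ (not just measurability) is exactly what makes $g \circ f$ an admissible test function for weak convergence on $X$.
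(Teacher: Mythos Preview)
Your proposal is correct and follows exactly the paper's approach: apply the change-of-variables identity $\int g\, d(f_\ast(\nu)) = \int (g\circ f)\, d\nu$ and then invoke weak convergence on $X$ using that $g\circ f \in \mb{C}(X)$. The paper's version is terser (it simply asserts the identity rather than deriving it via the indicator--simple--monotone-convergence staircase), but the argument is the same.
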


\begin{proof}
Let $(\mu_n)_{n=1}^\infty$ be a sequence of measures on $X$ that converges weakly to $\mu$. Suppose that $g \in \mb{C}(Y)$. Then
\[
\int g ~df_\ast(\mu_n) = \int (g \circ f) ~d\mu_n \to \int (g \circ f) ~d\mu = \int g ~df_\ast(\mu)
\]
because the composition $g \circ f$ is continuous.
\end{proof}

\begin{prop}\label{intersection_in_ultrametric}
Let $(X,d)$ be an ultrametric space. If $r \leq s$ and $B_d(x,r) \cap B_d(y,s)$ is nonempty, then $B_d(x,r) \subseteq B_d(y,s)$.
\end{prop}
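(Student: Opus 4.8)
The plan is to use the strong triangle (ultrametric) inequality $d(a,c) \leq \max\{d(a,b), d(b,c)\}$ twice, routing everything through a common point of the two balls.

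First I would fix a point $z \in B_d(x,r) \cap B_d(y,s)$, which exists by hypothesis; thus $d(x,z) \leq r$ and $d(y,z) \leq s$. Next, let $w$ be an arbitrary element of $B_d(x,r)$, so that $d(x,w) \leq r$. Applying the ultrametric inequality to the triple $w, x, z$ gives $d(w,z) \leq \max\{d(w,x), d(x,z)\} \leq \max\{r,r\} = r$. Then applying it again to the triple $w, z, y$ gives $d(w,y) \leq \max\{d(w,z), d(z,y)\} \leq \max\{r, s\}$. Since $r \leq s$, this last maximum equals $s$, so $d(w,y) \leq s$, i.e. $w \in B_d(y,s)$. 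As $w$ was arbitrary, $B_d(x,r) \subseteq B_d(y,s)$.

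There is no real obstacle here: the argument is a direct two-step application of the defining inequality of an ultrametric. The only point requiring a modicum of care is the bookkeeping of the radii — one must observe that the first application keeps the radius at $r$ (because both legs are bounded by $r$), and that the hypothesis $r \leq s$ is exactly what is needed at the final step to absorb the maximum into $s$. I would also remark in passing that the same computation, run with the roles of the two balls symmetric, shows that in an ultrametric space any two balls are either nested or disjoint, which is the usual way this proposition is used later.
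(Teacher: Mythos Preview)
Your proof is correct and essentially identical to the paper's: both fix a common point $z$ and apply the ultrametric inequality twice (the paper collapses the two applications into a single chained inequality $d(y,w) \leq \max\{d(y,z), d(z,x), d(x,w)\} \leq s$, while you spell out the intermediate bound $d(w,z) \leq r$). The closing remark about arbitrary ultrametric balls being nested or disjoint is a nice addition.
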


\begin{proof}
Suppose that $z$ lies in the intersection, which means $d(x,z) \leq r$ and $d(y,z) \leq s$. If $w \in B_d(x,r)$, then
\[
d(y,w) \leq \max\{d(y,z),d(z,w)\} \leq \max\{d(y,z),d(z,x),d(x,w)\} \leq s
\]
because $d(x,w) \leq r$ and $d$ is an ultrametric, and so $w \in B_d(y,s)$.
\end{proof}

\begin{cor}\label{balls_are_closed_open}
A ball of nonzero radius in an ultrametric space $(X,d)$ is closed and open.
\end{cor}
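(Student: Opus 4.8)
The plan is to prove the two halves separately, and to note at the outset that the ultrametric hypothesis is needed only for openness. For closedness, I would observe that $B_d(x,r)$ is the preimage of the closed interval $[0,r]$ under the map $y \mapsto d(x,y)$, which is continuous, so $B_d(x,r)$ is closed in any metric space; alternatively, one checks directly that the complement is open by showing that if $d(x,y) > r$ then $B_d(y,r) \cap B_d(x,r) = \emptyset$, which follows at once from the ultrametric inequality applied to the triple $x$, $y$, and a putative common point.

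For openness, I would fix an arbitrary $y \in B_d(x,r)$ and argue that $y$ is an interior point. Since $r \neq 0$, the point $y$ lies in $B_d(y,r)$, and it lies in $B_d(x,r)$ by assumption, so $B_d(y,r) \cap B_d(x,r)$ is nonempty. Applying Proposition \ref{intersection_in_ultrametric} with both radii equal to $r$ gives $B_d(y,r) \subseteq B_d(x,r)$. The open ball $\{z \in X : d(y,z) < r\}$ is then a neighborhood of $y$ — here again $r \neq 0$ is used — and it is contained in $B_d(y,r) \subseteq B_d(x,r)$. Hence every point of $B_d(x,r)$ is interior, so the ball is open.

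I do not anticipate any genuine obstacle: the substance is entirely packaged in Proposition \ref{intersection_in_ultrametric}. The only thing to be careful about is the hypothesis $r \neq 0$, which enters twice in the openness argument (to place $y$ inside $B_d(y,r)$ so that the proposition applies, and to ensure the witnessing open ball is nonempty, hence an actual neighborhood of $y$); a radius-zero ball is a singleton and need not be open.
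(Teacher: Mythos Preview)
Your proof is correct and follows essentially the same route as the paper: closedness is immediate from the definition of $B_d(x,r)$ (the paper simply says ``by definition''), and openness comes from applying Proposition~\ref{intersection_in_ultrametric} with equal radii to get $B_d(y,r)\subseteq B_d(x,r)$, then exhibiting $\{z:d(y,z)<r\}$ as an open neighborhood of $y$ inside $B_d(x,r)$. One small remark on your commentary: since $d(y,y)=0\le r$ for every $r\ge 0$, the membership $y\in B_d(y,r)$ does not require $r\ne 0$; the hypothesis $r>0$ is genuinely used only once, to ensure that $\{z:d(y,z)<r\}$ contains $y$ and is therefore a neighborhood.
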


\begin{proof}
Consider the ball $B = B_d(x,r)$ for some $x \in X$ and $r$ a positive real number. By definition, $B$ is closed. To see that $B$ is open, let $y \in B$. Since $B_d(x,r) \cap B_d(y,r)$ is nonempty, Proposition \ref{intersection_in_ultrametric} implies that $B_d(x,r) = B_d(y,r)$. Then
\[
\{z \in X ~:~ d(y,z) < r\} \subseteq B_d(y,r) = B_d(x,r) = B,
\]
and so $B$ is open.
\end{proof}

\begin{lem}\label{simple_functions_approximate_continuous_functions}
Let $(X,d)$ be a compact ultrametric space, and let $\mu$ be a measure on $X$. If $f \in \mb{C}(X)$ and $\e$ is a positive real number, there is a simple function
\[
s_\e = \sum_{i=1}^k a_i \chi_{B_i}
\]
for some real numbers $a_i$ and balls $B_i$ such that $\left|\int (f - s_\e) ~d\mu\right| < \e$. The function $s_\e$ does not depend on the measure $\mu$.
\end{lem}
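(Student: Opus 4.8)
The plan is to use the uniform continuity of $f$ — available because $X$ is compact — together with the two facts about ultrametric balls already recorded: by Corollary \ref{balls_are_closed_open} a ball of positive radius is clopen, and by Proposition \ref{intersection_in_ultrametric} two balls of the same radius are either disjoint or equal. First I would invoke uniform continuity to produce $\delta > 0$ with $d(x,y) < \delta \implies |f(x) - f(y)| < \e/2$, and then fix any radius $r$ with $0 < r < \delta$.

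Next I would build a finite partition of $X$ into balls of radius $r$. The collection $\{B_d(x,r) ~:~ x \in X\}$ is an open cover of $X$ by Corollary \ref{balls_are_closed_open}, so compactness yields a finite subcover $B_d(x_1,r), \dots, B_d(x_m,r)$. Applying Proposition \ref{intersection_in_ultrametric} with $r = s$ (and using the symmetry between $x$ and $y$), any two of these balls are either disjoint or coincide; discarding the repetitions leaves pairwise disjoint balls $B_1, \dots, B_k$ that still cover $X$, i.e.\ a finite Borel partition. For each $i$ I would choose a point $y_i \in B_i$ and set $a_i = f(y_i)$ and $s_\e = \sum_{i=1}^k a_i \chi_{B_i}$. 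Nothing in this construction refers to $\mu$, so $s_\e$ is independent of the measure, as required.

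It then remains to estimate the integral. If $x \in B_i$, then $d(x, y_i) \leq r < \delta$, so $|f(x) - s_\e(x)| = |f(x) - a_i| < \e/2$, and since the $B_i$ partition $X$ this holds for every $x \in X$. Hence, using that $\mu$ is a probability measure,
\[
\left| \int (f - s_\e) ~d\mu \right| \leq \int |f - s_\e| ~d\mu \leq \frac{\e}{2}\,\mu(X) = \frac{\e}{2} < \e .
\]
(The empty space $X = \emptyset$ is handled trivially by $k = 0$ and $s_\e = 0$.)

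The genuinely load-bearing step is the passage from an open cover by $r$-balls to an honest partition: this is the only place the ultrametric hypothesis enters, and it is precisely what Proposition \ref{intersection_in_ultrametric} is designed to supply. Everything else — uniform continuity, the pointwise bound, and the observation that the construction ignores $\mu$ — is routine bookkeeping. The two small points to watch are choosing $r$ strictly positive so that Corollary \ref{balls_are_closed_open} applies, and keeping a factor such as $\e/2$ in reserve so that the final inequality comes out strict.
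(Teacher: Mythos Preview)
Your proof is correct and follows essentially the same route as the paper's: use uniform continuity, extract a finite cover of $X$ by balls of a single radius, invoke the ultrametric property (Proposition~\ref{intersection_in_ultrametric}) to turn this cover into a partition, and define $s_\e$ by sampling $f$ at one point of each piece. Your choice of a radius $r$ strictly smaller than $\delta$ is in fact slightly more careful than the paper's version (which takes the balls of radius $\delta$ itself and then writes $d(x,x_i) < \delta$ where only $d(x,x_i) \leq \delta$ is guaranteed), but the argument is otherwise identical.
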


\begin{proof}
Let $\e$ be a positive real number. Since $X$ is compact, the function $f$ is uniformly continuous, which means there is a positive real number $\delta$ such that
\[
\forall x \in X ~~ \forall y \in X ~~ d(x,y) < \delta ~~ \Rightarrow ~~ |f(x) - f(y)| < \e.
\]
Furthermore, the set $X$ can be covered by the collection $\{B_d(x,\delta) ~:~ x \in X\}$ of open sets. Using the fact that $X$ is compact, it follows that
\[
X = \bigcup_{i=1}^k B_d(x_i,\delta)
\]
for some $x_i \in X$. This union is disjoint because $X$ is an ultrametric space. Consider the function
\[
s_\e = \sum_{i=1}^k f(x_i) \chi_{B_i}
\]
where $B_i = B_d(x_i,\delta)$. Following this, if $x \in X$, there is a unique integer $i$ such that $x \in B_d(x_i,\delta)$. Then
\[
|f(x) - s_\e(x)| = |f(x) - f(x_i)| < \e
\]
because $d(x,x_i) < \delta$. Thus $|f(x) - s_\e(x)| < \e$ for all $x \in X$. By several properties of integration, we see that
\[
\left|\int (f - s_\e) ~d\mu\right| \leq \int |f - s_\e| ~d\mu < \e,
\]
as required.
\end{proof}

\begin{theo}\label{weak_conv_iff_char_func_conv}
Let $(X,d)$ be a compact ultrametric space. A sequence $(\mu_n)_{n=1}^\infty$ of measures on $X$ converges weakly to $\mu \in \mc{M}(X)$ if and only if
\[
\forall \e > 0 ~~ \forall x \in X ~~ \mu_n(B_d(x,\e)) \to \mu(B_d(x,\e)).
\]
\end{theo}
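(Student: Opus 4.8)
The plan is to prove both directions, using Lemma~\ref{simple_functions_approximate_continuous_functions} to reduce weak convergence (an assertion about all continuous functions) to an assertion about the characteristic functions of balls.

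For the forward direction, suppose $\mu_n \to \mu$ weakly. Fix $x \in X$ and $\e > 0$. By Corollary~\ref{balls_are_closed_open}, the ball $B = B_d(x,\e)$ is both closed and open, so its characteristic function $\chi_B$ is continuous. Applying the definition of weak convergence directly to $f = \chi_B \in \mb{C}(X)$ gives $\mu_n(B) = \int \chi_B \, d\mu_n \to \int \chi_B \, d\mu = \mu(B)$, as desired. This direction is immediate and requires no real work.

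For the converse, suppose $\mu_n(B_d(x,\e)) \to \mu(B_d(x,\e))$ for every $x$ and every $\e > 0$. I must show $\int f \, d\mu_n \to \int f \, d\mu$ for an arbitrary $f \in \mb{C}(X)$. Fix $f$ and $\e > 0$. Apply Lemma~\ref{simple_functions_approximate_continuous_functions} to obtain a simple function $s = \sum_{i=1}^k a_i \chi_{B_i}$, a finite linear combination of characteristic functions of balls, with $\left|\int (f - s)\, d\nu\right| < \e$ for \emph{every} measure $\nu$ on $X$ — crucially, the same $s$ works for all of $\mu, \mu_1, \mu_2, \dots$ simultaneously, which is exactly the measure-independence clause of the lemma. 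By the hypothesis, $\int s \, d\mu_n = \sum_{i=1}^k a_i \mu_n(B_i) \to \sum_{i=1}^k a_i \mu(B_i) = \int s\, d\mu$, so there is $N$ with $\left|\int s\, d\mu_n - \int s\, d\mu\right| < \e$ for $n \geq N$. Then a triangle-inequality estimate
\[
\left|\int f\, d\mu_n - \int f\, d\mu\right| \leq \left|\int (f-s)\, d\mu_n\right| + \left|\int s\, d\mu_n - \int s\, d\mu\right| + \left|\int (s-f)\, d\mu\right| < 3\e
\]
holds for $n \geq N$. Since $\e > 0$ was arbitrary, $\int f\, d\mu_n \to \int f\, d\mu$.

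The main obstacle, and the reason the ultrametric hypothesis is essential, is hidden in the two lemmas I am invoking. Corollary~\ref{balls_are_closed_open} (clopenness of balls) makes $\chi_B$ continuous, which fails in a general metric space; and Lemma~\ref{simple_functions_approximate_continuous_functions} lets me approximate $f$ uniformly — hence in $L^1(\nu)$ uniformly in $\nu$ — by combinations of $\chi_{B_i}$, again exploiting that the covering balls can be taken disjoint and clopen. Granting those, the argument above is just bookkeeping; the only mild care needed is to keep track that the approximating simple function does not depend on $n$, so that the finitely many convergences $\mu_n(B_i) \to \mu(B_i)$ can be combined into a single choice of $N$.
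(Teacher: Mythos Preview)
Your proof is correct and follows essentially the same approach as the paper's own argument: the forward direction uses Corollary~\ref{balls_are_closed_open} to get continuity of $\chi_B$, and the converse uses Lemma~\ref{simple_functions_approximate_continuous_functions} to approximate $f$ by a simple function independent of the measure, followed by the same $3\e$ triangle-inequality estimate.
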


\begin{proof}
Let $B = B_d(x,\e)$ for some $x \in X$ and $\e$ a positive real number. Since $X$ is an ultrametric space, the set $B$ is closed and open by Corollary \ref{balls_are_closed_open}, which means the characteristic function $\chi_B$ is continuous on $X$. If $\mu$ is the weak limit of $(\mu_n)_{n=1}^\infty$, then
\[
\mu_n(B) = \int \chi_B ~d\mu_n \to \int \chi_B ~d\mu = \mu(B).
\]

Conversely, to see that the sequence $(\mu_n)_{n=1}^\infty$ converges weakly to $\mu$, let $f \in \mb{C}(X)$, and let $\e$ be a positive real number. By Lemma \ref{simple_functions_approximate_continuous_functions}, there is a simple function
\[
s_\e = \sum_{i=1}^k a_i \chi_{B_i}
\]
such that $|\int (f - s_\e) ~d\mu| < \e$ and $|\int (f - s_\e) ~d\mu_n| < \e$ for all positive integers $n$. By the hypothesis and the linearity of integration,
\[
\int s_\e ~d\mu_n \to \int s_\e ~d\mu,
\]
so there exists a positive integer $N$ such that
\[
\forall n \geq N ~~ \left|\int s_\e ~d\mu_n - \int s_\e ~d\mu\right| < \e.
\]
Then
\begin{align*}
\left|\int f ~d\mu_n - \int f ~d\mu\right| &\leq \left|\int (f - s_\e) ~d\mu_n\right|\\
                                           &+ \left|\int s_\e ~d\mu_n - \int s_\e ~d\mu\right| + \left|\int (s_\e - f) ~d\mu\right| < 3\e
\end{align*}
for all integers $n \geq N$, and the result follows.
\end{proof}

We end this section with an important result due to Andrei Kolmogorov and Yuri Prokhorov whose proof is omitted, but is available in a book by Patrick Billingsley \cite[p.\ 17]{copm}.

\begin{theo}\label{kolmogorov_prokhorov}
Let $X$ be a metric space; let $\mu$ and $\mu_n$ for all positive integers $n$ be measures on $(X,\mc{B}(X))$. Suppose that $\mc{A} \subseteq \mc{B}(X)$ such that
\begin{enumerate}
\item[(i)] $\mc{A}$ is closed under finite intersections, and

\item[(ii)] every open subset of $X$ is the union of countably many elements of $\mc{A}$.
\end{enumerate}

If $\mu_n(A) \to \mu(A)$ for all $A \in \mc{A}$, then $(\mu_n)_{n=1}^\infty$ converges weakly to $\mu$.
\end{theo}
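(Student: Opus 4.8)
The plan is to reduce weak convergence to a single inequality for open sets and then exploit that $\mc{A}$ is closed under finite intersections by means of inclusion--exclusion. Recall the portmanteau theorem (see \cite{copm}): for probability measures on a metric space, $(\mu_n)_{n=1}^\infty$ converges weakly to $\mu$ if and only if $\liminf_{n \to \infty} \mu_n(G) \geq \mu(G)$ for every open set $G \subseteq X$. (If one wishes to avoid quoting this, one can argue directly: for continuous $f$ with $0 \leq f \leq 1$ one has $\int f ~d\nu = \int_0^1 \nu(\{f > t\}) ~dt$ for any probability measure $\nu$, and Fatou's lemma applied to the open sets $\{f > t\}$ and the closed sets $\{f \geq t\}$ reduces the convergence of integrals to estimates on open sets again.) So it suffices to fix an open set $G$ and prove $\liminf_{n \to \infty} \mu_n(G) \geq \mu(G)$.

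By hypothesis (ii), write $G = \bigcup_{i=1}^\infty A_i$ with each $A_i \in \mc{A}$, and fix $\e > 0$. Continuity of $\mu$ from below yields an integer $m$ with $\mu\bigl(\bigcup_{i=1}^m A_i\bigr) > \mu(G) - \e$. Set $U = \bigcup_{i=1}^m A_i$. This finite union need not itself lie in $\mc{A}$ --- this is the heart of the difficulty --- but the inclusion--exclusion formula gives
\[
\nu(U) = \sum_{\emptyset \neq S \subseteq \{1,\dots,m\}} (-1)^{|S|+1} \, \nu\!\left(\bigcap_{i \in S} A_i\right)
\]
for \emph{any} probability measure $\nu$, and each intersection $\bigcap_{i \in S} A_i$ belongs to $\mc{A}$ by hypothesis (i). Applying this identity to $\nu = \mu_n$ and to $\nu = \mu$, and passing to the limit $n \to \infty$ term by term in this finite sum using $\mu_n(A) \to \mu(A)$ for $A \in \mc{A}$, we get $\mu_n(U) \to \mu(U)$.

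Consequently $\liminf_{n \to \infty} \mu_n(G) \geq \liminf_{n \to \infty} \mu_n(U) = \mu(U) > \mu(G) - \e$; since $\e > 0$ was arbitrary, $\liminf_{n \to \infty} \mu_n(G) \geq \mu(G)$. As $G$ ranged over all open sets, the portmanteau criterion delivers weak convergence. The one genuine obstacle is precisely the passage from the open set $G$ to a set on which the hypothesis can be invoked: because $\mc{A}$ is a $\pi$-system and not a union-closed family, a finite subunion of the $A_i$ is not directly a legitimate test set, and it is the combination of inclusion--exclusion with continuity of $\mu$ from below that bridges this gap. Everything else is routine.
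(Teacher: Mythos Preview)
Your argument is correct. The paper itself does not prove this theorem: it is stated without proof and the reader is referred to Billingsley \cite[p.\ 17]{copm}. Your route via the portmanteau criterion together with inclusion--exclusion on a finite subcover is exactly the standard argument one finds there, so there is nothing to compare against in the paper proper.
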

%
%
\section{Rooted and birooted graphs}

Next we look at some more basic concepts, which are more specific to our purposes. In the remaining sections, we fix a positive integer $\Delta$.

Let $G$ be a graph. Denote by $G_x$ the connected component of $G$ whose vertex set contains $x$. Define $d_G(x,y)$ to be the length of the shortest path from $x$ to $y$ in $G$ if $G$ is connected. For every $r \in \NN$ and $o \in V(G)$, $B_G(o,r)$ is the subgraph of $G$ induced by the set of vertices
\[
\{x \in V(G) ~:~ d_{G_o}(o,x) \leq r\},
\]
and $N_G(o)$ is the set of vertices that are adjacent to $o$.

\begin{defn}
A \emph{rooted graph} is a pair $(G,o)$ where $G$ is a graph and $o \in V(G)$; a \emph{birooted graph} is a triple $(G,o_1,o_2)$ where $G$ is a graph, $o_1 \in V(G)$, and $o_2 \in N_G(o_1)$.
\end{defn}

Let $\Gr$ be the set of all isomorphism classes $[G,o]$ of countable, connected rooted graphs $(G,o)$ such that $\deg_G(x) \leq \Delta$ for all $x \in V(G)$.

Define the metric $\rho : \Gr \times \Gr \to \RR$ as follows:
\[
\rho([G,o],[H,p]) =
\begin{cases}
0 & \text{ if $[G,o] = [H,p]$,}\\
2^{-r} & \text{ otherwise}
\end{cases}
\]
where $r = \sup\{s \in \NN ~:~ [B_G(o,s),o] = [B_H(p,s),p]\}$. Denote by $\tau$ the topology induced by the metric $\rho$. That is, a basis for $\tau$ is the collection of balls in the metric space $(\Gr,\rho)$.

Similarly, $\vec\Gr$ is the set of all isomorphism classes $[G,o_1,o_2]$ of countable, connected birooted graphs $(G,o_1,o_2)$ such that $\deg_G(x) \leq \Delta$ for all $x \in V(G)$.

An analogous metric $\vec\rho : \vec\Gr \times \vec\Gr \to \RR$ is defined by
\[
\vec\rho([G,o_1,o_2],[H,p_1,p_2]) =
\begin{cases}
0 & \text{ if $[G,o_1,o_2] = [H,p_1,p_2]$,}\\
2^{-r} & \text{ otherwise}
\end{cases}
\]
where $r = \sup\{s \in \NN ~:~ [B_G(o_1,s),o_1,o_2] = [B_H(p_1,s),p_1,p_2]\}$. Unsurprisingly, the topology induced by $\vec\rho$ is denoted by $\vec\tau$.

Of course, the reader should not believe that $\rho$ and $\vec\rho$ are, in fact, ultrametrics without careful verification. However, rather than restate the arguments here, we refer the reader to this author's previous work \cite{wcolofg}.

\begin{theo}
The pairs $(\Gr,\rho)$ and $(\vec\Gr,\vec\rho)$ are compact ultrametric spaces.
\end{theo}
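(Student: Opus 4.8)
The plan is to prove that $(\Gr,\rho)$ is a compact ultrametric space; the argument for $(\vec\Gr,\vec\rho)$ is entirely parallel and I would simply remark that it proceeds \emph{mutatis mutandis}. First I would dispatch the claim that $\rho$ is an ultrametric. Symmetry and the fact that $\rho([G,o],[H,p]) = 0$ exactly when $[G,o]=[H,p]$ are immediate from the definition, so the only content is the strong triangle inequality $\rho([G,o],[H,p]) \leq \max\{\rho([G,o],[K,q]),\rho([K,q],[H,p])\}$. The key observation is that for a rooted graph, the finite balls $[B_G(o,s),o]$ form a coherent decreasing chain: if $[B_G(o,s),o] = [B_H(p,s),p]$ then $[B_G(o,t),o] = [B_H(p,t),p]$ for all $t \leq s$, since an isomorphism of the $s$-balls restricts to an isomorphism of the $t$-balls. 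Hence if $r_1 = \sup\{s : [B_G(o,s),o]=[B_K(q,s),q]\}$ and $r_2 = \sup\{s : [B_K(q,s),q]=[B_H(p,s),p]\}$, then for every $s \leq \min\{r_1,r_2\}$ the two isomorphisms compose to give $[B_G(o,s),o]=[B_H(p,s),p]$, so the relevant supremum for the pair $(G,H)$ is at least $\min\{r_1,r_2\}$, which yields $\rho([G,o],[H,p]) \leq 2^{-\min\{r_1,r_2\}} = \max\{2^{-r_1},2^{-r_2}\}$. (As the excerpt notes, the details of this verification already appear in \cite{wcolofg}, so I would keep this terse.)

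The substantive part is compactness. Since $(\Gr,\rho)$ is a metric space, it suffices to show it is complete and totally bounded; equivalently, and more cleanly here, I would show it is sequentially compact. Given a sequence $([G_n,o_n])_{n=1}^\infty$ in $\Gr$, I would extract a convergent subsequence by a diagonal argument over the radii of balls. The crucial finiteness input is that for each fixed $r \in \NN$ there are only \emph{finitely many} isomorphism classes of rooted graphs of the form $[B,o]$ that can arise as $[B_G(o,r),o]$ for some $[G,o] \in \Gr$: such a ball has at most $1 + \Delta + \Delta^2 + \cdots + \Delta^r$ vertices (every vertex is within distance $r$ of $o$ and each has degree at most $\Delta$), and there are only finitely many graphs on a bounded vertex set. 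Therefore, passing to a subsequence, we may assume $[B_{G_n}(o_n,1),o_n]$ is constant in $n$; passing to a further subsequence, $[B_{G_n}(o_n,2),o_n]$ is constant; and so on. Taking the diagonal subsequence $([G_{n_k},o_{n_k}])_k$, for every $r$ the balls $[B_{G_{n_k}}(o_{n_k},r),o_{n_k}]$ are eventually constant as $k \to \infty$.

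It remains to identify the limit. The eventually-constant values of the $r$-balls are automatically compatible — by the coherence observation above, the constant value at radius $r+1$ restricts at radius $r$ to the constant value at radius $r$ — so I would build a rooted graph $(G,o)$ as the direct limit (increasing union) of this chain of finite rooted balls: take $G = \bigcup_r B_r$ where $B_r$ is a representative of the stabilized class at radius $r$, chosen so that $B_r$ is an induced subgraph of $B_{r+1}$ with the same root $o$. The resulting $G$ is connected (every vertex lies in some $B_r$, hence is joined to $o$ by a path), countable, and has all degrees at most $\Delta$ (a vertex at distance $\leq r$ from $o$ already has all its neighbours inside $B_{r+1}$, so its degree in $G$ equals its degree in $B_{r+1}$, which is at most $\Delta$); thus $[G,o] \in \Gr$. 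Moreover $[B_G(o,r),o] = [B_r,o] = [B_{G_{n_k}}(o_{n_k},r),o_{n_k}]$ for all large $k$, so $\rho([G_{n_k},o_{n_k}],[G,o]) \leq 2^{-r}$ for large $k$; since $r$ is arbitrary, $[G_{n_k},o_{n_k}] \to [G,o]$. The main obstacle, and the step deserving the most care, is precisely this last one: checking that the tower of stabilized balls glues into a single well-defined graph in $\Gr$ whose own $r$-balls recover the tower — one must be slightly careful that $B_G(o,r)$ computed inside the infinite graph $G$ coincides with $B_r$ and is not accidentally larger, which is exactly where the observation "all neighbours of a distance-$\leq r$ vertex already appear by radius $r+1$" is used. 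Everything else (the ultrametric axioms, the cardinality bound on balls, the diagonal extraction) is routine.
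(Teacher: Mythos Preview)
Your argument is correct, and it is the standard one: verify the strong triangle inequality via the downward coherence of balls, then prove sequential compactness by a diagonal extraction against the finitely many rooted $r$-ball types for each $r$, and finally realize the limit as the increasing union of the stabilized balls. The one delicate point you flag---that the $r$-ball of the limit graph $G$ really is the stabilized $B_r$ rather than something larger---is handled correctly by your observation that all neighbours of a vertex at distance $\leq r$ already lie in $B_{r+1}$, together with the fact that each $B_r$ sits as an \emph{induced} subgraph in every $B_s$ with $s \geq r$.

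There is nothing to compare against in the paper itself: the author does not prove this theorem here but simply refers the reader to his earlier work \cite{wcolofg} for the verification. Your write-up thus supplies what the paper omits; the approach you take is exactly the one anticipated by the surrounding text (note the paper's subsequent Lemma~\ref{equal_balls_bounded_dist} and Proposition~\ref{elek_top_is_dist_top}, which rephrase $\rho$-balls as the sets $T_r(\Gr,\alpha)$), and is almost certainly what the cited reference contains.
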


We now turn our attention to another collection of graphs, this time having no specified root. Let $\Graph$ be the set of all isomorphism classes of finite graphs $G$ such that $\deg_G(x) \leq \Delta$ for all $x \in V(G)$.

\begin{defn}
A \emph{rooted $r$-ball} is a rooted graph $[G,o] \in \Gr$ such that $d_G(x,o) \leq r$ for all $x \in V(G)$. The set of rooted $r$-balls is denoted by $U_r$. If $G \in \Graph$ and $o \in V(G)$, then $[B_G(o,r),o] \in U_r$ is the \emph{rooted $r$-ball around $o$ in $G$}.

A \emph{birooted $r$-ball} is a birooted graph $[G,o_1,o_2] \in \vec\Gr$ such that $[G,o_1] \in U_r$. The set of birooted $r$-balls is denoted by $\vec{U}_r$.
\end{defn}

If $\alpha \in U_r$ and $\vec\alpha \in \vec{U}_r$, let
\[
T_r(\Gr,\alpha) = \{[G,o] \in \Gr ~:~ [B_G(o,r),o] = \alpha\}
\]
and
\[
T_r(\vec\Gr,\vec\alpha) = \{[G,o_1,o_2] \in \vec\Gr ~:~ [B_G(o_1,r),o_1,o_2] = \vec\alpha\}.
\]
The strange notation of the collections above is adopted from papers by G\'abor Elek \cite{nolofg,otlolggs}, although with the addition of a subscript on the $T$ for better clarity.

Following a few technical results, it will be shown that the two collections above are important subsets of $\Gr$ and $\vec\Gr$.

\begin{prop}\label{iso_is_iso}
Graph isomorphisms are isometries.
\end{prop}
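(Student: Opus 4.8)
The statement to be proved is that the bijection on vertex sets induced by a graph isomorphism $\varphi \colon G \to H$ is an isometry, once $V(G)$ and $V(H)$ are equipped with the ultrametrics obtained by pulling $\rho$ (respectively $\vec\rho$) back along the canonical maps $x \mapsto [G_x, x]$ and $(x,y) \mapsto [G_x, x, y]$ (the latter defined for $y \in N_G(x)$). The whole argument rests on one elementary observation. Because $\varphi$ preserves adjacency and non-adjacency, it carries connected components to connected components, so $\varphi(G_x) = H_{\varphi(x)}$, and it preserves path lengths, so $d_{G_x}(x,v) = d_{H_{\varphi(x)}}(\varphi(x),\varphi(v))$ for all $x$ and $v$ lying in a common component. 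In particular $v \in V(B_G(x,r))$ if and only if $\varphi(v) \in V(B_H(\varphi(x),r))$, so $\varphi$ restricts to a bijection $V(B_G(x,r)) \to V(B_H(\varphi(x),r))$; since each of these balls is the subgraph \emph{induced} on the corresponding vertex set and $\varphi$ respects the edge relation, this restriction is itself a graph isomorphism, sending the root $x$ to $\varphi(x)$ and, when a second root $y \in N_G(x)$ is present, sending $y$ to $\varphi(y) \in N_H(\varphi(x))$. Hence
\[
[B_G(x,r), x] = [B_H(\varphi(x),r), \varphi(x)]
\quad\text{and}\quad
[B_G(x,r), x, y] = [B_H(\varphi(x),r), \varphi(x), \varphi(y)]
\]
for every $r \in \NN$.

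The remaining step is to feed this into the definitions of $\rho$ and $\vec\rho$. Fix vertices $x, z$ of $G$, and in the birooted case also neighbors $y$ of $x$ and $w$ of $z$. For each $s \in \NN$, transitivity of isomorphism of rooted graphs, combined with the displayed equalities applied at both $x$ and $z$, shows that $[B_G(x,s),x] = [B_G(z,s),z]$ holds if and only if $[B_H(\varphi(x),s),\varphi(x)] = [B_H(\varphi(z),s),\varphi(z)]$, and similarly with the second roots carried along. Therefore the set of integers $s$ over which the supremum in the definition of $\rho$ is taken is the same for the pair $([G_x,x],[G_z,z])$ and for its image under $\varphi$, so the two suprema coincide and the two distances are equal; the birooted case is identical with $\vec\rho$ and $\vec\Gr$ in place of $\rho$ and $\Gr$. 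This is precisely the assertion that $\varphi$ is an isometry.

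I do not expect a real obstacle, since the argument is essentially bookkeeping; the one point deserving care is the claim that the restriction of $\varphi$ to a ball is again an isomorphism of rooted (or birooted) graphs, which uses that $B_G(o,r)$ is an \emph{induced} subgraph and not merely some subgraph on the right vertex set. It is worth isolating this ``transport of balls'' property on its own, since the well-definedness of $T_r(\Gr,\alpha)$ and $T_r(\vec\Gr,\vec\alpha)$ as unions of isomorphism classes — and the later proof that these sets are closed and open — depend on exactly it.
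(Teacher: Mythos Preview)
You have misread the statement. In this paper ``isometry'' refers to the shortest-path metric $d_G$ on the vertex set, not to any pullback of the ultrametric $\rho$ or $\vec\rho$. This is clear from how the proposition is invoked afterwards: in Lemma~\ref{equal_balls_bounded_dist} it is used to conclude that the image of $B_G(o,r)$ under a rooted isomorphism is $B_H(p,r)$, and in Lemma~\ref{equal_balls_equiv_nbhds} it is used literally as $d_H(\varphi(x),\varphi(o)) = d_G(x,o)$. The paper's proof is accordingly two lines: a shortest path $P$ from $x$ to $y$ in $G$ is carried by $\varphi$ to a shortest path $\varphi(P)$ from $\varphi(x)$ to $\varphi(y)$ in $H$, whence $d_G(x,y) = |E(P)| = |E(\varphi(P))| = d_H(\varphi(x),\varphi(y))$.

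Your write-up is not wrong as mathematics; indeed the ``elementary observation'' you open with---that $\varphi$ preserves path lengths---\emph{is} exactly the content of the proposition, and you treat it as already known. Everything after that (transport of balls to balls, equality of the suprema defining $\rho$ and $\vec\rho$) establishes a different and considerably stronger claim than the one stated. As a side remark, the pullback of $\rho$ along $x \mapsto [G_x,x]$ is in general only a pseudometric, since distinct vertices in the same $\Aut(G)$-orbit collapse to a single point of $\Gr$, so the statement you set out to prove would need rephrasing in any case.
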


\begin{proof}
Let $\varphi : G \to H$ be a graph isomorphism for some graphs $G$ and $H$. If $x$ and $y$ are connected by a shortest path $P$ in $G$, then $\varphi(x)$ and $\varphi(y)$ are connected by the shortest path $\varphi(P)$, and so
\[
d_G(x,y) = |E(P)| = |E(\varphi(P))| = d_H(\varphi(x),\varphi(y)).
\]
Hence $\varphi$ preserves the shortest path metric, meaning it is an isometry.
\end{proof}

\begin{lem}\label{equal_balls_bounded_dist}
If $[G,o]$ and $[H,p]$ are distinct, then
\[
[B_G(o,r),o] = [B_H(p,r),p]
\]
if and only if
\[
\rho([G,o],[H,p]) \leq 2^{-r}.
\]
\end{lem}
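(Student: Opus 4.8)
The plan is to unwind the definition of $\rho$ and relate it directly to the condition on $r$-balls. Recall that for distinct $[G,o]$ and $[H,p]$ we have $\rho([G,o],[H,p]) = 2^{-s}$ where $s = \sup\{t \in \NN : [B_G(o,t),o] = [B_H(p,t),p]\}$. So the whole statement reduces to the elementary equivalence $[B_G(o,r),o] = [B_H(p,r),p] \iff s \geq r$, together with the observation that $2^{-s} \leq 2^{-r}$ is the same as $s \geq r$ since $2^{-x}$ is strictly decreasing.

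The first thing I would establish is a monotonicity fact: if $[B_G(o,t),o] = [B_H(p,t),p]$ for some $t \in \NN$, then the same equality holds for every $t' \leq t$. This is because an isomorphism of rooted $t$-balls restricts to an isomorphism of rooted $t'$-balls — the ball $B_G(o,t')$ is the subgraph of $B_G(o,t)$ induced by the vertices at distance at most $t'$ from $o$, and a rooted isomorphism preserves distance from the root by Proposition \ref{iso_is_iso}, hence carries this vertex set onto the corresponding one in $B_H(p,t)$ and the induced-subgraph structure along with it. Consequently the set $\{t \in \NN : [B_G(o,t),o] = [B_H(p,t),p]\}$ is a down-closed subset of $\NN$, i.e. an initial segment.

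Given this, the forward direction is immediate: if $[B_G(o,r),o] = [B_H(p,r),p]$ then $r$ belongs to the set whose supremum is $s$, so $s \geq r$, whence $\rho([G,o],[H,p]) = 2^{-s} \leq 2^{-r}$. For the reverse direction, suppose $\rho([G,o],[H,p]) \leq 2^{-r}$. Since the graphs are distinct, $\rho([G,o],[H,p]) = 2^{-s}$, so $2^{-s} \leq 2^{-r}$ forces $s \geq r$. Now $s$ is the supremum of an initial segment of $\NN$; because $s \geq r$ and the set is down-closed, $r$ itself lies in the set — here one should note that the set is nonempty (it contains $0$, since the rooted $0$-ball is just the isolated root) and that when the set is finite its supremum is attained, while if it is all of $\NN$ then certainly $r$ is in it. Either way $[B_G(o,r),o] = [B_H(p,r),p]$.

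The only place that requires genuine (if still routine) care is the monotonicity step — specifically the claim that restricting a rooted isomorphism $B_G(o,t) \to B_H(p,t)$ to the radius-$t'$ vertices yields an isomorphism of the \emph{induced} subgraphs $B_G(o,t')$ and $B_H(p,t')$. One must check both that distances computed inside $B_G(o,t)$ agree with distances computed inside $G_o$ for vertices within radius $t'$ (so that "distance $\leq t'$ from $o$" means the same thing in both graphs, which holds because a geodesic from $o$ to a vertex at distance $\leq t' \leq t$ stays within radius $t$), and that the isomorphism, being an isometry, maps this vertex set bijectively to its counterpart. Everything else is bookkeeping with the definition of $\rho$ and the monotonicity of the exponential.
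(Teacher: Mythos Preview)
Your proposal is correct and follows essentially the same approach as the paper: both arguments reduce to comparing $r$ with the supremum $s$ and use that a rooted isomorphism of $s$-balls restricts (via Proposition~\ref{iso_is_iso}) to one of $r$-balls when $r \le s$. You package this restriction step as a separate monotonicity lemma and are slightly more explicit about why the supremum is attained, but the substance is identical; note also that your ``set is all of $\NN$'' case cannot actually occur since the hypothesis that $[G,o]$ and $[H,p]$ are distinct forces the set to be finite.
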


\begin{proof}
Let $\rho([G,o],[H,p]) = 2^{-s}$ where
\[
s = \sup\{t \in \NN ~:~ [B_G(o,t),o] = [B_H(p,t),p]\}.
\]
If $[B_G(o,r),o] = [B_H(p,r),p]$, then $r \leq s$, and so $2^{-s} \leq 2^{-r}$. Conversely, assume that $2^{-s} \leq 2^{-r}$. That is, $r \leq s$. By definition, $[B_G(o,s),o] = [B_H(p,s),p]$. Let $\varphi : B_G(o,s) \to B_H(p,s)$ be a graph isomorphism such that $p = \varphi(o)$. Note that $B_G(o,r) \subseteq B_G(o,s)$, and consider the restriction $\varphi'$ of $\varphi$ to $B_G(o,r)$. The image of $B_G(o,r)$ under $\varphi'$ is $B_H(p,r)$ because $\varphi$ is an isometry.
\end{proof}

\begin{prop}\label{elek_top_is_dist_top}
The following equalities hold:
\[
T_r(\Gr,[B_G(o,r),o]) = B_\rho([G,o],2^{-r})
\]
and
\[
T_r(\vec\Gr,[B_G(o_1,r),o_1,o_2]) = B_{\vec\rho}([G,o_1,o_2],2^{-r}).
\]
\end{prop}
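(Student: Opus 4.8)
The plan is to observe that this proposition is essentially a repackaging of Lemma~\ref{equal_balls_bounded_dist}, once the notation is unwound. First I would fix $[H,p] \in \Gr$ and note that, by the definition of $T_r(\Gr,\cdot)$ and by the (closed) ball convention for $B_\rho$, membership $[H,p] \in T_r(\Gr,[B_G(o,r),o])$ means precisely $[B_H(p,r),p] = [B_G(o,r),o]$, while $[H,p] \in B_\rho([G,o],2^{-r})$ means precisely $\rho([G,o],[H,p]) \leq 2^{-r}$. Hence it suffices to show these two conditions are equivalent for every $[H,p] \in \Gr$, and both sides of the first displayed equality then consist of the same elements.

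Next I would split into two cases. If $[H,p] = [G,o]$, then $\rho([G,o],[H,p]) = 0 \leq 2^{-r}$ and $[B_H(p,r),p] = [B_G(o,r),o]$ trivially, so both conditions hold and $[G,o]$ lies in both sides. If $[H,p] \neq [G,o]$, then Lemma~\ref{equal_balls_bounded_dist} applies directly and yields exactly the required equivalence. This settles the first displayed equality.

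For the second displayed equality I would run the identical argument in $\vec\Gr$, using $\vec\rho$ in place of $\rho$. The one point deserving comment is that Lemma~\ref{equal_balls_bounded_dist} is stated only for rooted graphs; however, its proof transcribes verbatim to birooted graphs, since an isomorphism of birooted $s$-balls $(B_G(o_1,s),o_1,o_2) \to (B_H(p_1,s),p_1,p_2)$ is still a graph isomorphism, hence an isometry by Proposition~\ref{iso_is_iso}, so for $r \leq s$ it restricts to an isomorphism of the birooted $r$-balls, and $\vec\rho$ is computed from $B_G(o_1,s)$ exactly as $\rho$ is computed from $B_G(o,s)$. With that analogue in hand, the case split $[H,p_1,p_2] = [G,o_1,o_2]$ versus $[H,p_1,p_2] \neq [G,o_1,o_2]$ completes the proof.

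I do not anticipate a genuine obstacle: the substance is entirely contained in Lemma~\ref{equal_balls_bounded_dist}. The only mild care needed is matching the closed-ball convention (radius $\leq 2^{-r}$) against the "$\leq 2^{-r}$" appearing in that lemma, and supplying the birooted analogue of the lemma, which is a routine copy of the existing argument.
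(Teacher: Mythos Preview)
Your proposal is correct and matches the paper's own proof, which likewise unwinds the definitions and invokes Lemma~\ref{equal_balls_bounded_dist} for the middle equivalence, then declares the birooted case analogous. Your explicit treatment of the trivial case $[H,p]=[G,o]$ and your remark on transcribing the lemma to $\vec\Gr$ are just slightly more careful than the paper, but the approach is the same.
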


\begin{proof}
To see that the first equality is true, observe that
\begin{align*}
[H,p] \in T_r(\Gr,[B_G(o,r),o]) ~~ &\Leftrightarrow ~~ [B_H(p,r),p] = [B_G(o,r),o]\\
                                   &\Leftrightarrow ~~ \rho([G,o],[H,p]) \leq 2^{-r}\\
                                   &\Leftrightarrow ~~ [H,p] \in B_\rho([G,o],2^{-r}),
\end{align*}
where the second equivalence holds by Lemma \ref{equal_balls_bounded_dist}. The proof of the second equality is analogous.
\end{proof}

\begin{cor}\label{t_sets_are_bases}
The collections $\{T_r(\Gr,\alpha) ~:~ r \in \NN ~;~ \alpha \in U_r\}$ and $\{T_r(\vec\Gr,\vec\alpha) ~:~ r \in \NN ~;~ \vec\alpha \in \vec{U}_r\}$ are bases for the topologies $\tau$ and $\vec\tau$, respectively.
\end{cor}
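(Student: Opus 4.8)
The plan is to show that each of the two collections forms a basis for the respective topology by combining Proposition \ref{elek_top_is_dist_top} with the fact that, by definition of $\tau$ and $\vec\tau$, the collections of metric balls $\{B_\rho([G,o],2^{-r}) : r \in \NN,\ [G,o] \in \Gr\}$ and $\{B_{\vec\rho}([G,o_1,o_2],2^{-r}) : r \in \NN,\ [G,o_1,o_2] \in \vec\Gr\}$ are bases for $\tau$ and $\vec\tau$. I will treat the rooted case in detail; the birooted case is verbatim the same with arrows inserted.

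First I would observe that every ball in the ultrametric space $(\Gr,\rho)$ of positive radius contains a ball of radius $2^{-r}$ for some $r \in \NN$: indeed, the value set of $\rho$ is $\{0\} \cup \{2^{-s} : s \in \NN\}$, so for any $[G,o]$ and any real $\varepsilon > 0$, choosing $r$ with $2^{-r} \leq \varepsilon$ gives $B_\rho([G,o],2^{-r}) \subseteq B_\rho([G,o],\varepsilon)$, and of course $[G,o]$ lies in the smaller ball. Hence the subcollection of balls of radius of the form $2^{-r}$ already constitutes a basis for $\tau$. Next I would note that each such ball is, by Proposition \ref{elek_top_is_dist_top}, exactly $T_r(\Gr,[B_G(o,r),o])$, and that $[B_G(o,r),o]$ is a rooted $r$-ball, i.e.\ an element of $U_r$; conversely, given any $r \in \NN$ and any $\alpha \in U_r$, writing $\alpha = [B_G(o,r),o]$ for a representative $(G,o)$ (which one may take with $G$ itself equal to the $r$-ball), the set $T_r(\Gr,\alpha)$ equals the ball $B_\rho([G,o],2^{-r})$ and is therefore open. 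This establishes that the collection $\{T_r(\Gr,\alpha) : r \in \NN,\ \alpha \in U_r\}$ coincides, as a family of subsets of $\Gr$, with the family of all metric balls of radius $2^{-r}$, which we have just argued is a basis.

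The only point requiring a modicum of care — and the one I would single out as the main obstacle — is the bookkeeping in the converse direction: checking that every $T_r(\Gr,\alpha)$ with $\alpha \in U_r$ genuinely arises as a metric ball, which amounts to verifying that $T_r(\Gr,\alpha)$ is nonempty, or handling the empty case. If $\alpha \in U_r$ then $\alpha$ is itself a rooted graph $[K,o]$ with all distances to the root at most $r$ and all degrees at most $\Delta$, so $[K,o] \in \Gr$ and $[B_K(o,r),o] = [K,o] = \alpha$, whence $[K,o] \in T_r(\Gr,\alpha)$ and the set is nonempty; then Proposition \ref{elek_top_is_dist_top} applies with this representative and gives $T_r(\Gr,\alpha) = B_\rho([K,o],2^{-r})$. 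Finally, I would remark that the identical argument with $\vec\rho$, $\vec\Gr$, $\vec{U}_r$, and the second equality of Proposition \ref{elek_top_is_dist_top} in place of the first yields the statement for $\vec\tau$, and the corollary follows.
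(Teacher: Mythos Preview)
Your proof is correct and follows essentially the same approach as the paper: identify the collection $\{T_r(\Gr,\alpha)\}$ with the collection of metric balls $\{B_\rho([G,o],2^{-r})\}$ via Proposition~\ref{elek_top_is_dist_top}, using that every $\alpha \in U_r$ can be written as $[B_G(o,r),o]$ for some $[G,o] \in \Gr$. Your write-up is more detailed than the paper's---in particular you explicitly justify why balls of radius $2^{-r}$ already form a basis for $\tau$, a step the paper leaves implicit---but the underlying argument is the same.
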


\begin{proof}
Let $\alpha \in U_r$. Since $\alpha = [G,o]$ for some $[G,o] \in \Gr$ and $d_G(x,o) \leq r$ for all $x \in V(G)$, it follows that $\alpha = [B_G(o,r),o]$. That is,
\begin{align*}
\{T_r(\Gr,\alpha) ~:~ &r \in \NN ~;~ \alpha \in U_r\}\\
                      &= \{T_r(\Gr,[B_G(o,r),o]) ~:~ r \in \NN ~;~ [G,o] \in \Gr\}\\
                      &= \{B_\rho([G,o],2^{-r}) ~:~ r \in \NN ~;~ [G,o] \in \Gr\}
\end{align*}
where the second equality holds by Proposition \ref{elek_top_is_dist_top}. The same is true for the latter collection.
\end{proof}

\begin{cor}\label{t_sets_are_closed_open}
The sets $T_r(\Gr,\alpha)$ and $T_r(\vec\Gr,\vec\alpha)$ are both closed and open in $\Gr$ and $\vec\Gr$, respectively.
\end{cor}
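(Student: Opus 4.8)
The plan is to recognize each set $T_r(\Gr,\alpha)$ as a ball of positive radius in the ultrametric space $(\Gr,\rho)$, and then appeal to Corollary \ref{balls_are_closed_open}. First I would note that any $\alpha \in U_r$ is, by definition, a rooted graph $[G,o] \in \Gr$ with $d_G(x,o) \leq r$ for every $x \in V(G)$; hence $B_G(o,r) = G$ and therefore $\alpha = [B_G(o,r),o]$. This is exactly the rewriting carried out at the start of the proof of Corollary \ref{t_sets_are_bases}. With $\alpha$ expressed in this form, Proposition \ref{elek_top_is_dist_top} applies directly and yields
\[
T_r(\Gr,\alpha) = T_r(\Gr,[B_G(o,r),o]) = B_\rho([G,o],2^{-r}).
\]
Since $2^{-r} > 0$ and $(\Gr,\rho)$ is an ultrametric space, Corollary \ref{balls_are_closed_open} shows that this ball is both closed and open in $\Gr$, which is the first assertion.

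For the second assertion I would run the identical argument in $(\vec\Gr,\vec\rho)$: a birooted $r$-ball $\vec\alpha \in \vec{U}_r$ is a birooted graph $[G,o_1,o_2] \in \vec\Gr$ with $[G,o_1] \in U_r$, so $B_G(o_1,r) = G$ and $\vec\alpha = [B_G(o_1,r),o_1,o_2]$; then the second equality in Proposition \ref{elek_top_is_dist_top} gives $T_r(\vec\Gr,\vec\alpha) = B_{\vec\rho}([G,o_1,o_2],2^{-r})$, and Corollary \ref{balls_are_closed_open} applies again because $(\vec\Gr,\vec\rho)$ is also an ultrametric space. I do not expect any genuine obstacle here; the only step worth stating explicitly is the observation $\alpha = [B_G(o,r),o]$ (and its birooted analogue), after which the result is an immediate combination of Proposition \ref{elek_top_is_dist_top} and Corollary \ref{balls_are_closed_open}.
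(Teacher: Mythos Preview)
Your proposal is correct and follows essentially the same approach as the paper: identify $T_r(\Gr,\alpha)$ and $T_r(\vec\Gr,\vec\alpha)$ as balls of positive radius (via Proposition~\ref{elek_top_is_dist_top}) and then invoke Corollary~\ref{balls_are_closed_open}. The paper's proof is simply the one-line ``Since $\Gr$ and $\vec\Gr$ are ultrametric spaces, the result is true by Corollary~\ref{balls_are_closed_open},'' leaving the identification with balls implicit from the preceding results; your version just makes that step explicit.
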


\begin{proof}
Since $\Gr$ and $\vec\Gr$ are ultrametric spaces, the result is true by Corollary \ref{balls_are_closed_open}.
\end{proof}

\begin{prop}\label{t_satisfies_kp_conditions}
The collection
\[
\{T_r(\vec\Gr,\vec\alpha) ~:~ r \in \NN ~;~ \vec\alpha \in \vec{U}_r\} \cup \{\emptyset\}
\]
\begin{enumerate}
\item[(i)] is closed under finite intersections, and

\item[(ii)] every open subset of $\vec\Gr$ is a finite union of its elements.
\end{enumerate}
\end{prop}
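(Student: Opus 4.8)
The plan is to separate the two parts, relying throughout on Proposition \ref{elek_top_is_dist_top}, which identifies each set $T_r(\vec\Gr,\vec\alpha)$ with the metric ball $B_{\vec\rho}$ of radius $2^{-r}$, and on the ultrametric structure of $\vec\Gr$. A feature I would use repeatedly is that the degree bound $\Delta$ forces $\vec{U}_r$ to be finite: a birooted $r$-ball has at most $1 + \Delta + \Delta(\Delta-1) + \dots + \Delta(\Delta-1)^{r-1}$ vertices, so up to isomorphism there are only finitely many of them. Consequently, for each fixed $r$ the family $\{T_r(\vec\Gr,\vec\alpha) : \vec\alpha \in \vec{U}_r\}$ is a finite clopen partition of $\vec\Gr$ (clopen by Corollary \ref{t_sets_are_closed_open}), and these partitions refine as $r$ increases.

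For part (i) I would take two members $T_r(\vec\Gr,\vec\alpha)$ and $T_s(\vec\Gr,\vec\beta)$ and assume without loss of generality that $r \le s$, so that by Proposition \ref{elek_top_is_dist_top} they are balls of radii $2^{-r} \ge 2^{-s}$. If the two are disjoint, their intersection is $\emptyset$, which has been adjoined to the collection. If they meet, then Proposition \ref{intersection_in_ultrametric}, applied with the smaller radius $2^{-s}$, forces $T_s(\vec\Gr,\vec\beta) \subseteq T_r(\vec\Gr,\vec\alpha)$, so the intersection equals $T_s(\vec\Gr,\vec\beta)$ and again lies in the collection. Closure under finite intersections then follows by a routine induction on the number of factors. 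This part I expect to be straightforward.

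For part (ii) the natural approach is to combine the basis property with compactness. By Corollary \ref{t_sets_are_bases} every open $U$ is a union of members of the collection, so I would cover $U$ by the $T$-sets it contains and then extract a finite subfamily. The mechanism is the finite refinement above: if I can locate a single level $r$ for which $U$ is exactly a union of level-$r$ blocks, then $U$ is a finite union, because $\vec{U}_r$ is finite. To produce such an $r$ I would argue via compactness together with the uniform structure of $\vec\Gr$; since every block at level $r$ is a ball of radius $2^{-r}$ and these radii shrink to zero, a covering of the compact set $U$ by blocks lying inside it admits a finite subcover, and passing to a common level collapses this into a union of finitely many level-$r$ blocks.

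The step I expect to be the main obstacle is exactly this extraction of finiteness, and it is where the compactness input must be pinned down precisely. A finite union of the $T$-sets is a finite union of clopen sets, hence clopen; so for the finite subcover to apply one must have $U$ compact, i.e.\ closed as well as open. The delicate point is therefore to confirm that the open sets at issue are in fact compact, after which the argument of the previous paragraph goes through cleanly. Should the finite form resist this verification for some open set, the same reasoning still yields a \emph{countable} union—choosing one basic neighbourhood inside $U$ at each point and using that the whole basis is countable—and since Theorem \ref{kolmogorov_prokhorov} requires only countable unions, this weaker conclusion suffices for the intended application.
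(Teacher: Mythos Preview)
Your treatment of part~(i) is correct and is precisely the content behind the paper's one-line proof (``easily follows from Corollary~\ref{t_sets_are_bases} and the compactness of $\vec\Gr$''): once the $T$-sets are identified with ultrametric balls via Proposition~\ref{elek_top_is_dist_top}, Proposition~\ref{intersection_in_ultrametric} forces any two of them to be nested or disjoint, so their intersection is either $\emptyset$ or the smaller of the two.

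For part~(ii) you have been more careful than the paper, and rightly so: the claim as stated is actually false whenever $\Delta\ge 2$. A finite union of clopen sets is clopen, so the assertion would force every open subset of $\vec\Gr$ to be closed; but $\vec\Gr$ contains non-isolated points---for any infinite $[G,o_1,o_2]$ and any $r\in\NN$, the finite birooted graph $[B_G(o_1,r{+}1),o_1,o_2]$ lies in the same $T_r$-block yet is distinct from $[G,o_1,o_2]$---so the complement of such a point is open but not closed, hence not a finite union of $T$-sets. The paper's appeal to compactness does not resolve this, since an arbitrary open subset need not be compact. Your fallback to the countable version is correct (immediate from Corollary~\ref{t_sets_are_bases} together with the finiteness of each $\vec{U}_r$) and, as you observe, is exactly what Theorem~\ref{kolmogorov_prokhorov} requires; the downstream application in Proposition~\ref{mu_conv_mu_arrow_conv} is therefore unaffected.
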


\begin{proof}
The result easily follows from Corollary \ref{t_sets_are_bases} and the compactness of $\vec\Gr$.
\end{proof}
%
%
\section{Laws}

\begin{defn}
The \emph{law} is a function $\Psi : \Graph \to \mc{M}(\Gr)$ defined as follows: for every graph $G \in \Graph$,
\[
\Psi(G)[G_o,o] = \frac{|\Aut(G)o|}{|V(G)|}
\]
if $G_o$ is a connected component of $G$ for some $o \in V(G)$, and $\Psi(G) = 0$ elsewhere. Here $\Aut(G)$ is the group of automorphisms on $G$, and $\Aut(G)o$ is the \emph{orbit} of the vertex $o$ in $G$:
\[
\Aut(G)o = \{x \in V(G) ~:~ \exists \varphi \in \Aut(G) ~~ \varphi(x) = o\}.
\]
The image $\Psi(G)$ of a finite graph $G \in \Graph$ is a probability measure on $\Gr$ called \emph{the law of $G$}. Usually, we will simply write \emph{the law} when no reference to a specific graph is necessary.
\end{defn}

If $\alpha \in U_r$ and $G \in \Graph$, let
\[
T_r(G,\alpha) = \{x \in V(G) ~:~ [B_G(x,r),x] = \alpha\}
\]
and
\[
p_G(\alpha,r) = \frac{|T_r(G,\alpha)|}{|V(G)|}.
\]

Using this notation, G\'abor Elek \cite{nolofg,otlolggs} defines the weak convergence of ``laws'' in the following way.

\begin{defn}
A graph sequence $(G_n)_{n=1}^\infty$ in $\Graph$ \emph{converges weakly} if there is a measure $\mu$ on $\Gr$ such that
\[
p_{G_n}(\alpha,r) \to \mu(T_r(\Gr,\alpha))
\]
for all $r \in \NN$ and $\alpha \in U_r$.
\end{defn}

To see that the quotation marks around the word ``laws'' are not necessary, consider this next pair of results.

\begin{lem}\label{law_eval_on_set}
Suppose that $G \in \Graph$. Then
\[
\Psi(G)(T_r(\Gr,\alpha)) = \frac{|T_r(G,\alpha)|}{|V(G)|}
\]
for all $r \in \NN$ and $\alpha \in U_r$.
\end{lem}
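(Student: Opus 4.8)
The plan is to unwind both sides of the claimed identity and observe that they count the same thing, namely the normalized number of vertices of $G$ whose rooted $r$-ball is isomorphic to $\alpha$. Fix $G \in \Graph$, $r \in \NN$, and $\alpha \in U_r$. Since $G$ is finite, $V(G)$ decomposes into its connected components, and I would begin by restricting attention to a single component: if $G_o$ is the component containing $o$, then by definition of $\Psi$ the measure $\Psi(G)$ is supported on the countable (in fact finite) set $\{[G_o,o] : o \in V(G)\}$, with $\Psi(G)[G_o,o] = |\Aut(G)o|/|V(G)|$. Hence for any Borel set $S \subseteq \Gr$ we have $\Psi(G)(S) = \sum |\Aut(G)o|/|V(G)|$, where the sum runs over a set of orbit representatives $o$ with $[G_o,o] \in S$.

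Next I would apply this with $S = T_r(\Gr,\alpha)$. The key observation is that $[G_o,o] \in T_r(\Gr,\alpha)$ if and only if $[B_{G_o}(o,r),o] = [B_G(o,r),o] = \alpha$, that is, if and only if $o \in T_r(G,\alpha)$; note $B_G(o,r)$ only involves vertices within distance $r$ of $o$ in $G_o$, so there is no discrepancy between taking the ball in $G$ versus in $G_o$. Moreover, membership in $T_r(G,\alpha)$ is constant on $\Aut(G)$-orbits: if $\varphi \in \Aut(G)$ then $\varphi$ restricts to an isomorphism $B_G(o,r) \to B_G(\varphi(o),r)$ carrying $o$ to $\varphi(o)$ (using that automorphisms preserve the shortest-path metric, as in Proposition~\ref{iso_is_iso}), so $[B_G(o,r),o] = [B_G(\varphi(o),r),\varphi(o)]$. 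Therefore $T_r(G,\alpha)$ is a union of $\Aut(G)$-orbits, and summing $|\Aut(G)o|$ over a set of orbit representatives lying in $T_r(G,\alpha)$ gives exactly $|T_r(G,\alpha)|$. Dividing by $|V(G)|$ yields
\[
\Psi(G)(T_r(\Gr,\alpha)) = \sum_{\substack{o \text{ orbit rep.}\\ o \in T_r(G,\alpha)}} \frac{|\Aut(G)o|}{|V(G)|} = \frac{|T_r(G,\alpha)|}{|V(G)|},
\]
as desired.

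I expect the main subtlety — though it is minor — to be the bookkeeping around orbit representatives: one must be careful that distinct representatives $o, o'$ with $[G_o,o] = [G_{o'},o']$ are not double-counted in the definition of $\Psi(G)$, and conversely that every vertex in $T_r(G,\alpha)$ is accounted for by exactly one representative with the appropriate weight. This is handled by the orbit-stabilizer partition of $V(G)$ into $\Aut(G)$-orbits together with the fact, established above, that both the value of $\Psi(G)$ on a point and membership in $T_r(G,\alpha)$ depend only on the orbit. No other step requires more than the definitions and Proposition~\ref{iso_is_iso}.
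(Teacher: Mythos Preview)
Your proof is correct and follows essentially the same approach as the paper: both arguments reduce to the observation that $[G_o,o] \in T_r(\Gr,\alpha)$ if and only if $o \in T_r(G,\alpha)$, and then count. The paper compresses your orbit bookkeeping into the single identity $\int f\,d\Psi(G) = \frac{1}{|V(G)|}\sum_{x \in V(G)} f[G_x,x]$, whereas you derive that identity explicitly for the characteristic function in question by partitioning $V(G)$ into $\Aut(G)$-orbits and checking orbit-invariance; the content is the same.
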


\begin{proof}
If $G \in \Graph$, then
\[
\Psi(G)(T_r(\Gr,\alpha)) = \int \chi_{T_r(\Gr,\alpha)} ~d\Psi(G) = \frac{1}{|V(G)|}\sum_{x \in V(G)} \chi_{T_r(\Gr,\alpha)}[G,x] = \frac{|T_r(G,\alpha)|}{|V(G)|}
\]
for all $r \in \NN$ and $\alpha \in U_r$ where the third equality holds because $\chi_{T_r(\Gr,\alpha)}[G,o] = 1$ precisely when $\chi_{T_r(G,\alpha)}(o) = 1$ for all $o \in V(G)$.
\end{proof}

\begin{prop}
Let $G_n \in \Graph$ for all positive integers $n$. The sequence of laws $(\Psi(G_n))_{n=1}^\infty$ converges weakly if and only if the graph sequence $(G_n)_{n=1}^\infty$ does too.
\end{prop}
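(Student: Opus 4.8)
The plan is to reduce the claimed equivalence to Theorem~\ref{weak_conv_iff_char_func_conv}, using Lemma~\ref{law_eval_on_set} as the bridge between the measure-theoretic and combinatorial pictures. The key observation is that the sets $T_r(\Gr,\alpha)$ appearing in Elek's definition of weak convergence of graph sequences are exactly the balls $B_\rho([G,o],2^{-r})$ by Proposition~\ref{elek_top_is_dist_top}, so statements about $\mu_n(T_r(\Gr,\alpha)) \to \mu(T_r(\Gr,\alpha))$ are statements about convergence on balls in the compact ultrametric space $(\Gr,\rho)$.

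First I would spell out the forward direction. Assume $(\Psi(G_n))_{n=1}^\infty$ converges weakly to some $\mu \in \mc{M}(\Gr)$. By Theorem~\ref{weak_conv_iff_char_func_conv}, this is equivalent to $\Psi(G_n)(B_\rho([G,o],\e)) \to \mu(B_\rho([G,o],\e))$ for every $[G,o] \in \Gr$ and every $\e > 0$; since the radii of the form $2^{-r}$ are cofinal, it suffices to consider $\e = 2^{-r}$. Fix $r \in \NN$ and $\alpha \in U_r$. Writing $\alpha = [B_G(o,r),o]$ for an appropriate representative, Proposition~\ref{elek_top_is_dist_top} gives $B_\rho([G,o],2^{-r}) = T_r(\Gr,\alpha)$, and Lemma~\ref{law_eval_on_set} gives $\Psi(G_n)(T_r(\Gr,\alpha)) = |T_r(G_n,\alpha)|/|V(G_n)| = p_{G_n}(\alpha,r)$. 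Hence $p_{G_n}(\alpha,r) \to \mu(T_r(\Gr,\alpha))$, so $(G_n)_{n=1}^\infty$ converges weakly.

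For the converse, assume $(G_n)_{n=1}^\infty$ converges weakly, witnessed by a measure $\mu$ on $\Gr$ with $p_{G_n}(\alpha,r) \to \mu(T_r(\Gr,\alpha))$ for all $r \in \NN$, $\alpha \in U_r$. By Lemma~\ref{law_eval_on_set} this says $\Psi(G_n)(T_r(\Gr,\alpha)) \to \mu(T_r(\Gr,\alpha))$ on every set in the collection $\{T_r(\Gr,\alpha) : r \in \NN;\ \alpha \in U_r\}$. To conclude weak convergence of $(\Psi(G_n))_{n=1}^\infty$ to $\mu$ I can invoke either of two earlier results: Theorem~\ref{weak_conv_iff_char_func_conv}, after translating the $T_r(\Gr,\alpha)$ back to balls via Proposition~\ref{elek_top_is_dist_top} and noting that every ball has radius of the form $2^{-r}$; or Theorem~\ref{kolmogorov_prokhorov} (the Kolmogorov--Prokhorov criterion) applied with $\mc{A}$ the collection of all $T_r(\Gr,\alpha)$ together with $\emptyset$, whose two hypotheses are verified in Proposition~\ref{t_satisfies_kp_conditions} (stated there for $\vec\Gr$, but the identical argument works for $\Gr$). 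Either route closes the argument.

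I do not expect a serious obstacle here: the proposition is essentially a bookkeeping statement reconciling two definitions, and all the substantive work — that balls in $\Gr$ coincide with the sets $T_r(\Gr,\alpha)$, that the law evaluates on these sets by counting $r$-balls, and that convergence on such a basis of clopen sets upgrades to weak convergence — has already been done in the preceding lemmas and theorems. The only point requiring a modicum of care is the matching of radii: Elek's definition quantifies over $r \in \NN$ whereas weak convergence a priori concerns all radii $\e > 0$, so one must note that on $(\Gr,\rho)$ every ball of radius $\e \in (0,1]$ equals a ball of radius $2^{-r}$ for $r = \lceil \log_2(1/\e) \rceil$, making the two families of conditions literally identical.
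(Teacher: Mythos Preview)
Your proposal is correct and follows essentially the same route as the paper: both directions hinge on Lemma~\ref{law_eval_on_set} to identify $p_{G_n}(\alpha,r)$ with $\Psi(G_n)(T_r(\Gr,\alpha))$, and on Proposition~\ref{elek_top_is_dist_top} together with Theorem~\ref{weak_conv_iff_char_func_conv} to pass between convergence on the sets $T_r(\Gr,\alpha)$ and weak convergence. The only cosmetic difference is in the forward direction: the paper argues directly that $\chi_{T_r(\Gr,\alpha)}$ is continuous (Corollary~\ref{t_sets_are_closed_open}) and applies the definition of weak convergence, whereas you first invoke Theorem~\ref{weak_conv_iff_char_func_conv} and then translate balls to $T_r$-sets---the content is identical.
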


\begin{proof}
Suppose that $(\Psi(G_n))_{n=1}^\infty$ converges weakly to some measure $\mu$ on $\Gr$. By Corollary \ref{t_sets_are_closed_open}, $T_r(\Gr,\alpha)$ is closed and open, which means its characteristic function is continuous on $\Gr$. Using the definition of weak convergence and Lemma \ref{law_eval_on_set},
\[
\frac{|T_r(G_n,\alpha)|}{|V(G_n)|} = \int \chi_{T_r(\Gr,\alpha)} ~d\Psi(G_n) \rightarrow \int \chi_{T_r(\Gr,\alpha)} ~d\mu = \mu(T_r(\Gr,\alpha)). \tag{$\star$}
\]
for all $r \in \NN$ and $\alpha \in U_r$. Hence $(G_n)_{n=1}^\infty$ converges weakly.

Conversely, assume that $(G_n)_{n=1}^\infty$ converges weakly. Then ($\star$) holds for all $r \in \NN$ and $\alpha \in U_r$. Hence the sequence $(\Psi(G_n))_{n=1}^\infty$ converges weakly to $\mu$ by Theorem \ref{weak_conv_iff_char_func_conv} and Proposition \ref{elek_top_is_dist_top}.
\end{proof}
%
%
\section{Unimodularity versus involution invariance}

The following section guides the reader to the first of our goals. Namely, a proof that weak limits preserve the concept known as unimodularity. This result was stated by Itai Benjamini and Oded Schramm \cite[p.\ 10]{rodlofpg}, but we give a detailed argument.

%
%
\subsection{Preliminaries}

\begin{defn}
A measure $\mu$ on $\Gr$ is \emph{unimodular} if
\[
\int \sum_{x \in N_G(o)} f[G,x,o]~d\mu[G,o] = \int \sum_{x \in N_G(o)} f[G,o,x]~d\mu[G,o]
\]
for all nonnegative real-valued Borel functions $f$ on $\vec\Gr$.
\end{defn}

Define the function $\iota : \vec\Gr \to \vec\Gr$ by $\iota[G,x,y] = [G,y,x]$ for all $[G,x,y] \in \vec\Gr$. Every Borel subset $A$ of $\vec\Gr$ induces a function $f_A : \Gr \to \NN$ defined by
\[
f_A[G,o] = |\{x \in N_G(o) ~:~ [G,o,x] \in A\}|
\]
for all $[G,o] \in \Gr$. Let $\mu$ be a measure on $\Gr$. The measure $\vec\mu$ on $\vec\Gr$ is defined by $\vec\mu(A) = \int f_A ~d\mu$ for all Borel subsets $A$ of $\vec\Gr$.

\begin{defn}\label{defn_of_vec}
A measure $\mu$ on $\Gr$ is \emph{involution invariant} if $\iota_\ast(\vec\mu) = \vec\mu$.
\end{defn}

In fact, the concepts of unimodularity and involution invariance are logically equivalent as the following theorem demonstrates. This result seems to be known based on the different, yet equivalent, approaches taken by Elek \cite{nolofg,otlolggs}, and Aldous and Lyons \cite{pourn}, but there is no explicit argument in the literature.

\begin{theo}\label{unimodular_iff_inv_inv}
A measure $\mu$ on $\Gr$ is unimodular if and only if it is involution invariant.
\end{theo}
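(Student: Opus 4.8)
The plan is to show that the two integral conditions in the definitions are really the same statement, with the bridge being the measure $\vec\mu$ and the observation that integrating a nonnegative Borel function against $\vec\mu$ reconstructs exactly the left-hand sum in the definition of unimodularity. First I would establish the key identity
\[
\int f ~d\vec\mu = \int \sum_{x \in N_G(o)} f[G,o,x] ~d\mu[G,o]
\]
for every nonnegative Borel function $f$ on $\vec\Gr$. For $f = \chi_A$ the characteristic function of a Borel set $A$, this is immediate from the definition $\vec\mu(A) = \int f_A ~d\mu$ together with the fact that $f_A[G,o] = \sum_{x \in N_G(o)} \chi_A[G,o,x]$. The general case follows by the standard machine: linearity handles nonnegative simple functions, and the monotone convergence theorem (applied on both sides, using that $N_G(o)$ is finite so the inner sum commutes with monotone limits) extends it to arbitrary nonnegative Borel $f$.

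Next I would record the companion identity obtained by precomposing with $\iota$. Since $\iota$ is its own inverse and swaps the two roots, $f \circ \iota$ is again a nonnegative Borel function on $\vec\Gr$, and the change-of-variables formula for pushforwards gives
\[
\int f ~d\iota_\ast(\vec\mu) = \int (f \circ \iota) ~d\vec\mu = \int \sum_{x \in N_G(o)} f[G,x,o] ~d\mu[G,o],
\]
where the last equality applies the key identity to $f \circ \iota$ and uses $(f \circ \iota)[G,o,x] = f[G,x,o]$. One should check that $\iota$ is Borel (indeed it is a homeomorphism of $\vec\Gr$, as it permutes the basic clopen sets $T_r(\vec\Gr,\vec\alpha)$), so that the pushforward is well-defined and Proposition-style change of variables applies.

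With both identities in hand, the equivalence is a short logical step. The measure $\mu$ is unimodular precisely when the right sides of the two displayed identities agree for all nonnegative Borel $f$, i.e.\ when $\int f ~d\iota_\ast(\vec\mu) = \int f ~d\vec\mu$ for all such $f$. Taking $f = \chi_A$ shows this forces $\iota_\ast(\vec\mu)(A) = \vec\mu(A)$ for every Borel $A$, hence $\iota_\ast(\vec\mu) = \vec\mu$, which is involution invariance; conversely, if $\iota_\ast(\vec\mu) = \vec\mu$ then the two integrals agree for characteristic functions and hence, by the same approximation argument as before, for all nonnegative Borel $f$, giving unimodularity. The main obstacle is purely one of care rather than depth: making sure the interchange of sum and integral (and of sum and monotone limit) is justified — this is where finiteness of the degree bound $\Delta$, so that each $N_G(o)$ is finite, is quietly used — and confirming that $f_A$ is genuinely Borel measurable in $[G,o]$, which I would argue by first checking it for $A$ a basic clopen set $T_r(\vec\Gr,\vec\alpha)$ (where $f_A[G,o]$ depends only on the finite ball $B_G(o,r)$, hence is locally constant) and then invoking a Dynkin/monotone-class argument to pass to general Borel $A$.
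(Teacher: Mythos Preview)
Your proposal is correct and follows essentially the same approach as the paper: both reduce the equivalence to the identity $f_A[G,o] = \sum_{x \in N_G(o)} \chi_A[G,o,x]$ (and its $\iota$-twisted companion), use this to match the unimodularity sums with $\vec\mu$ and $\iota_\ast(\vec\mu)$ on characteristic functions, and then invoke the standard simple-function/monotone-convergence extension to reach all nonnegative Borel $f$. The only difference is that you are more explicit about the measurability checks for $f_A$ and $\iota$ and about the bounded-degree justification for interchanging sums and limits, whereas the paper leaves these implicit here (establishing continuity of $\iota$ separately in a later proposition).
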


\begin{proof}
Note that
\[
\sum_{x \in N_G(o)} \chi_A[G,o,x] = |\{x \in N_G(o) ~:~ [G,o,x] \in A\}| = f_A[G,o]
\]
and
\[
\sum_{x \in N_G(o)} (\chi_A \circ \iota)[G,o,x] = |\{x \in N_G(o) ~:~ \iota[G,o,x] \in A\}| = f_{\iota(A)}[G,o]
\]
for all Borel subsets $A$ of $\vec\Gr$. Suppose that $\mu$ is unimodular. Then
\[
\iota_\ast(\vec\mu)(A) = \vec\mu(\iota(A)) = \int f_{\iota(A)}[G,o] ~d\mu[G,o] = \int f_A[G,o] ~d\mu[G,o] = \vec\mu(A)
\]
for all Borel subsets $A$ of $\vec\Gr$. Conversely, if $\iota_\ast(\vec\mu) = \vec\mu$, then
\[
\int \sum_{x \in N_G(o)} \chi_A[G,o,x] ~d\mu[G,o] = \int \sum_{x \in N_G(o)} (\chi_A \circ \iota)[G,o,x] ~d\mu[G,o]
\]
for all Borel subsets $A$ of $\vec\Gr$. Since this holds for all characteristic functions, it is true for all simple functions, and so for all nonnegative real-valued Borel functions.
\end{proof}

%
%
\subsection{Weak limits preserve unimodularity}

Having defined and reconciled the definitions of unimodularity and involution invariance, it is time to overcome several technical results, and accomplish our first goal.

\begin{lem}\label{equal_balls_equiv_nbhds}
If $\varphi : (B_G(o,r),o) \to (B_H(p,r),p)$ is a rooted graph isomorphism, then $N_H(p) = \varphi(N_G(o))$.
\end{lem}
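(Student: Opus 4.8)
The plan is to unpack what $\varphi : (B_G(o,r),o) \to (B_H(p,r),p)$ being a rooted graph isomorphism means and to show the two inclusions $\varphi(N_G(o)) \subseteq N_H(p)$ and $N_H(p) \subseteq \varphi(N_G(o))$. The key observation is that every neighbour of $o$ lies in $B_G(o,r)$ (since $r \geq 1$; the case $r = 0$ is degenerate and can be noted separately, as then both neighbourhoods are empty when restricted to the ball), so $\varphi$ is actually defined on all of $N_G(o)$, and likewise $\varphi^{-1}$ is defined on all of $N_H(p)$.

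First I would argue the forward inclusion: if $x \in N_G(o)$, then $o$ and $x$ are adjacent in $B_G(o,r)$, so by the definition of a graph isomorphism $\varphi(o) = p$ and $\varphi(x)$ are adjacent in $B_H(p,r)$, hence adjacent in $H$, which gives $\varphi(x) \in N_H(p)$. Next I would argue the reverse inclusion by applying the same reasoning to $\varphi^{-1} : (B_H(p,r),p) \to (B_G(o,r),o)$, which is again a rooted graph isomorphism: if $y \in N_H(p)$, then $\varphi^{-1}(y) \in N_G(o)$, so $y = \varphi(\varphi^{-1}(y)) \in \varphi(N_G(o))$. Combining the two inclusions yields $N_H(p) = \varphi(N_G(o))$.

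The only real subtlety is the book-keeping about domains: one must be careful that $N_G(o) \subseteq V(B_G(o,r))$ and $N_H(p) \subseteq V(B_H(p,r))$ so that the isomorphism and its inverse may be applied. Since $B_G(o,r)$ is the subgraph induced by vertices at distance at most $r$ from $o$, and any neighbour of $o$ is at distance exactly $1 \leq r$, this is immediate for $r \geq 1$; I expect this to be a one-line remark rather than a genuine obstacle. No deep input is needed beyond the definition of a graph isomorphism preserving adjacency in both directions.
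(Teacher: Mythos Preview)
Your proposal is correct and follows essentially the same two-inclusion argument as the paper. The only cosmetic difference is that the paper phrases ``$x$ is a neighbour of $o$'' as ``$d_G(x,o)=1$'' and invokes the earlier fact that graph isomorphisms are isometries (Proposition~\ref{iso_is_iso}), whereas you appeal directly to adjacency preservation; for distance $1$ in a simple graph these are the same statement, so the arguments coincide.
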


\begin{proof}
If $y \in \varphi(N_G(o))$, then $y = \varphi(x)$ for some $x \in N_G(o)$. Since $\varphi$ is a graph isomorphism, Proposition \ref{iso_is_iso} implies that
\[
d_H(y,p) = d_H(\varphi(x),\varphi(o)) = d_G(x,o) = 1,
\]
and so $y \in N_H(p)$. Thus $\varphi(N_G(o)) \subseteq N_H(p)$. On the other hand, assume that $y \in N_H(p)$. Since $\varphi$ is bijective, there is an $x \in V(G)$ such that $y = \varphi(x)$. Furthermore,
\[
d_G(x,o) = d_H(\varphi(x),\varphi(o)) = d_H(y,p) = 1,
\]
which means $x \in N_G(o)$.
\end{proof}

\begin{prop}\label{f_on_t_is_cont}
The function $f_A$ is Lipschitz when $A = T_r(\vec\Gr,\vec\alpha)$. In particular, it is continuous.
\end{prop}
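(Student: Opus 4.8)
The plan is to establish the stronger fact that $f_A$ is \emph{locally constant}---constant on every ball of radius $2^{-r}$ in $(\Gr,\rho)$---and then to extract a Lipschitz constant from the crude bound $0 \le f_A \le \Delta$. The first point to make is that $f_A[G,o]$ depends only on the rooted ball $[B_G(o,r),o]$. One may assume $r \ge 1$, since $\vec{U}_0 = \emptyset$ (a one-vertex graph has no neighbour to serve as a second root) and so $T_0(\vec\Gr,\vec\alpha)$ never arises. Every $x \in N_G(o)$ satisfies $d_G(o,x) = 1 \le r$, hence $x$ is a vertex of $B_G(o,r)$; as $B_G(o,r)$ is an induced subgraph it contains the edge $\{o,x\}$, so that $N_{B_G(o,r)}(o) = N_G(o)$. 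Since $[G,o,x] \in T_r(\vec\Gr,\vec\alpha)$ precisely when $[B_G(o,r),o,x] = \vec\alpha$, it follows that
\[
f_A[G,o] = \bigl|\{x \in N_G(o) ~:~ [B_G(o,r),o,x] = \vec\alpha\}\bigr|,
\]
and the right-hand side depends only on $[B_G(o,r),o]$.

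Next I would turn this into local constancy. Suppose $\rho([G,o],[H,p]) \le 2^{-r}$. By Lemma \ref{equal_balls_bounded_dist} there is a rooted isomorphism $\varphi : (B_G(o,r),o) \to (B_H(p,r),p)$, which by Lemma \ref{equal_balls_equiv_nbhds} restricts to a bijection $N_G(o) \to N_H(p)$. For each $x \in N_G(o)$, $\varphi$ is simultaneously a birooted isomorphism $(B_G(o,r),o,x) \to (B_H(p,r),p,\varphi(x))$, so $[B_G(o,r),o,x] = [B_H(p,r),p,\varphi(x)]$; hence $x$ contributes to $f_A[G,o]$ if and only if $\varphi(x)$ contributes to $f_A[H,p]$, and therefore $f_A[G,o] = f_A[H,p]$.

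It then remains to read off the Lipschitz estimate. Since $\deg_G(o) \le \Delta$, one has $0 \le f_A[G,o] \le \Delta$, so $|f_A[G,o] - f_A[H,p]| \le \Delta$ for all $[G,o], [H,p] \in \Gr$. If $[G,o] = [H,p]$ there is nothing to prove; otherwise $\rho([G,o],[H,p]) = 2^{-s}$ for some $s \in \NN$, and when $s \ge r$ the previous paragraph yields $f_A[G,o] = f_A[H,p]$, while when $s \le r-1$ one has $|f_A[G,o] - f_A[H,p]| \le \Delta = \Delta\,2^{s}\,2^{-s} \le \Delta\,2^{r-1}\,\rho([G,o],[H,p])$. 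In all cases $|f_A[G,o] - f_A[H,p]| \le \Delta\,2^{r-1}\,\rho([G,o],[H,p])$, so $f_A$ is Lipschitz with constant $\Delta\,2^{r-1}$, and continuity is immediate.

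I expect the only delicate point to be the structural observation in the first paragraph---checking that every neighbour of the root genuinely lies inside $B_G(o,r)$, that the induced-subgraph convention faithfully records the birooted $r$-ball $[B_G(o,r),o,x]$, and that the degenerate case $r = 0$ causes no trouble. Everything else is a routine application of Lemmas \ref{equal_balls_bounded_dist} and \ref{equal_balls_equiv_nbhds} together with the bound $f_A \le \Delta$.
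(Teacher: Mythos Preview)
Your proof is correct and follows essentially the same strategy as the paper's: establish that $f_A$ is constant on $\rho$-balls of radius $2^{-r}$ via the rooted isomorphism and Lemma~\ref{equal_balls_equiv_nbhds}, then use the crude bound $0 \le f_A \le \Delta$ to handle the remaining case. You supply more detail (the explicit reduction to $[B_G(o,r),o]$ and the $r=0$ remark) and extract the slightly sharper constant $\Delta\,2^{r-1}$, but the argument is the same.
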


\begin{proof}
Let $A = T_r(\vec\Gr,\vec\alpha)$. If $\rho([G,o],[H,p]) \leq 2^{-r}$, there is a rooted graph isomorphism $\varphi : (B_G(o,r),o) \to (B_H(p,r),p)$. By Lemma \ref{equal_balls_equiv_nbhds}, $N_H(p) = \varphi(N_G(o))$. Since $\varphi$ is an isomorphism, it is easy to see that $f_A[H,p] = f_A[G,o]$. On the other hand, assume that $\rho([G,o],[H,p]) > 2^{-r}$. Then
\[
|f_A[G,o] - f_A[H,p]| \leq \Delta = \Delta2^r2^{-r} < \Delta2^r \cdot \rho([G,o],[H,p])
\]
because $0 \leq f_A \leq \Delta$. Hence $f_A$ is $\Delta2^r$-Lipschitz, and so it is continuous.
\end{proof}

\begin{lem}\label{adj_implies_subset}
If $y \in N_G(x)$, then $B_G(y,r - 1) \subseteq B_G(x,r)$.
\end{lem}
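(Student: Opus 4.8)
The plan is to prove the containment $B_G(y, r-1) \subseteq B_G(x,r)$ by showing that every vertex of $B_G(y,r-1)$ lies in $B_G(x,r)$, and that this respects the induced-subgraph structure. Recall that $B_G(x,r)$ is the subgraph of $G$ \emph{induced} by the set of vertices at distance at most $r$ from $x$ in the component $G_x$; so it suffices to prove the corresponding inclusion of vertex sets, since induced subgraphs on nested vertex sets are automatically nested.

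The key step is the triangle inequality for the shortest-path metric. First I would note that since $y \in N_G(x)$, the vertices $x$ and $y$ lie in the same connected component, so $G_x = G_y$ and distances measured in either agree. Then, for an arbitrary vertex $z \in V(B_G(y,r-1))$, we have $d_{G_y}(y,z) \leq r-1$ by definition. Since $d_{G_x}(x,y) = 1$ (as $y$ is adjacent to $x$), the triangle inequality gives
\[
d_{G_x}(x,z) \leq d_{G_x}(x,y) + d_{G_x}(y,z) \leq 1 + (r-1) = r,
\]
so $z \in V(B_G(x,r))$. This shows $V(B_G(y,r-1)) \subseteq V(B_G(x,r))$.

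To finish, I would observe that both $B_G(y,r-1)$ and $B_G(x,r)$ are induced subgraphs of $G$, so for any two vertices $u,w$ in the smaller vertex set, $\{u,w\}$ is an edge of $B_G(y,r-1)$ if and only if it is an edge of $G$ if and only if it is an edge of $B_G(x,r)$ (using that both $u,w$ also belong to the larger vertex set). Hence $B_G(y,r-1)$ is a subgraph of $B_G(x,r)$, as claimed. One should also address the degenerate case $r = 0$, where $B_G(y,-1)$ should be read as empty (or the statement is vacuous), so the interesting range is $r \geq 1$.

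I do not expect any real obstacle here: the statement is essentially the triangle inequality dressed up in the induced-subgraph formalism of Section 2. The only point requiring a little care is making sure the metric used in the definition of $B_G(y,r-1)$ (namely $d_{G_y}$) is the same as the one used for $B_G(x,r)$ (namely $d_{G_x}$), which follows immediately from $x$ and $y$ being adjacent and hence in a common component. Proposition \ref{iso_is_iso} is not needed; this is purely about a single graph $G$.
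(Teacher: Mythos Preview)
Your proposal is correct and follows essentially the same route as the paper: pick $z \in B_G(y,r-1)$, use $d_G(x,y)=1$ together with the triangle inequality to get $d_G(x,z)\le r$, and conclude $z\in B_G(x,r)$. The paper's proof is terser and does not spell out the induced-subgraph and same-component remarks you add, but the core argument is identical.
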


\begin{proof}
Suppose that $y \in N_G(x)$ and $z \in B_G(y,r - 1)$. Then $d_G(x,y) = 1$ and $d_G(y,z) \leq r - 1$, so
\[
d_G(x,z) \leq d_G(x,y) + d_G(y,z) = 1 + d_G(y,z) \leq r,
\]
which means $z \in B_G(x,r)$.
\end{proof}

\begin{lem}\label{iso_restricts_to_iso}
If $[B_G(o_1,r),o_1,o_2] = [B_H(p_1,r),p_1,p_2]$, then
\[
[B_G(o_2,r - 1),o_2,o_1] = [B_H(p_2,r - 1),p_2,p_1].
\]
\end{lem}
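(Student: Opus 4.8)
The plan is to produce the desired birooted isomorphism by restricting the given one. Suppose $\varphi : (B_G(o_1,r),o_1,o_2) \to (B_H(p_1,r),p_1,p_2)$ is a birooted graph isomorphism; that is, $\varphi$ is a graph isomorphism with $\varphi(o_1) = p_1$ and $\varphi(o_2) = p_2$. Since $o_2 \in N_G(o_1)$, Lemma \ref{adj_implies_subset} gives $B_G(o_2,r-1) \subseteq B_G(o_1,r)$, so the restriction $\psi = \varphi|_{B_G(o_2,r-1)}$ is well defined, and similarly $B_H(p_2,r-1) \subseteq B_H(p_1,r)$.

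Next I would identify the image of $\psi$. Because $\varphi$ is a graph isomorphism, it is an isometry by Proposition \ref{iso_is_iso} (applied within the connected balls), so it carries vertices at distance $\leq r-1$ from $o_2$ exactly to vertices at distance $\leq r-1$ from $\varphi(o_2) = p_2$; hence $\psi$ maps $V(B_G(o_2,r-1))$ bijectively onto $V(B_H(p_2,r-1))$. Since $B_G(o_2,r-1)$ and $B_H(p_2,r-1)$ are the induced subgraphs on these vertex sets, and $\varphi$ preserves adjacency in both directions, $\psi$ is a graph isomorphism between them. Finally $\psi(o_2) = p_2$ and $\psi(o_1) = p_1$, with $o_1 \in N_G(o_2)$ and $p_1 \in N_H(p_2)$, so $(B_G(o_2,r-1),o_2,o_1)$ and $(B_H(p_2,r-1),p_2,p_1)$ are isomorphic as birooted graphs, which is the claim.

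The only mildly delicate point is the claim that $\psi$ is onto $B_H(p_2,r-1)$: one must check that a vertex $z \in V(B_H(p_2,r-1))$, a priori only known to lie in $B_H(p_1,r)$, is actually hit by $\psi$ rather than by some vertex of $B_G(o_1,r)$ outside $B_G(o_2,r-1)$. This follows from the isometry property: writing $z = \varphi(w)$ for a unique $w \in V(B_G(o_1,r))$, we get $d_G(w,o_2) = d_H(z,p_2) \leq r-1$, so $w \in V(B_G(o_2,r-1))$. I expect this to be the main (and essentially only) obstacle, and it is minor; the rest is routine bookkeeping about restrictions of isomorphisms between induced subgraphs.
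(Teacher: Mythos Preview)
Your argument is correct and follows essentially the same route as the paper: restrict the given birooted isomorphism using Lemma~\ref{adj_implies_subset}, invoke the isometry property (Proposition~\ref{iso_is_iso}) to identify the image as $B_H(p_2,r-1)$, and check that the roots are carried correctly. Your last paragraph spelling out surjectivity is in fact more explicit than the paper's own one-line justification ``because $\varphi$ is an isometry.''
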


\begin{proof}
Suppose that $[B_G(o_1,r),o_1,o_2] = [B_H(p_1,r),p_1,p_2]$. There is a graph isomorphism $\varphi : B_G(o_1,r) \to B_H(p_1,r)$. By Lemma \ref{adj_implies_subset}, $B_G(o_2,r - 1) \subseteq B_G(o_1,r)$. Let $\varphi'$ be the restriction of $\varphi$ to $B_G(o_2,r - 1)$. The image of $\varphi'$ is $B_H(p_2,r - 1)$ because $\varphi$ is an isometry. It follows that $\varphi' : B_G(o_2,r - 1) \to B_H(p_2,r - 1)$ is a graph isomorphism. Furthermore, $\varphi'(o_1) = \varphi(o_1) = p_1$ and $\varphi'(o_2) = \varphi(o_2) = p_2$.
\end{proof}

\begin{prop}\label{inv_is_cont}
The function $\iota$ is a continuous involution. In fact, $\iota$ is a self-homeomorphism of $\vec\Gr$.
\end{prop}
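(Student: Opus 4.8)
The plan is to verify the three assertions in turn: that $\iota$ is a well-defined self-map of $\vec\Gr$, that it is an involution (hence a bijection), and that it is continuous. The homeomorphism claim is then immediate, because a continuous bijection whose inverse is itself continuous is a homeomorphism, and here $\iota^{-1} = \iota$.

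First, well-definedness. Since all graphs are simple and undirected, $o_2 \in N_G(o_1)$ is equivalent to $o_1 \in N_G(o_2)$, so $(G,o_2,o_1)$ is a birooted graph whenever $(G,o_1,o_2)$ is. Moreover, any graph isomorphism witnessing $[G,o_1,o_2] = [H,p_1,p_2]$ simultaneously witnesses $[G,o_2,o_1] = [H,p_2,p_1]$, so $\iota$ is well defined on isomorphism classes. The involution property is a one-line check: $\iota(\iota[G,o_1,o_2]) = \iota[G,o_2,o_1] = [G,o_1,o_2]$, so $\iota \circ \iota = \id_{\vec\Gr}$; in particular $\iota$ is a bijection with $\iota^{-1} = \iota$.

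For continuity I would use the basis $\{T_r(\vec\Gr,\vec\alpha)\}$ supplied by Corollary \ref{t_sets_are_bases} and show that each preimage $\iota^{-1}(T_r(\vec\Gr,\vec\alpha))$ is open. Fix $[G,o_1,o_2] \in \iota^{-1}(T_r(\vec\Gr,\vec\alpha))$, so that $[B_G(o_2,r),o_2,o_1] = \vec\alpha$. I claim $B_{\vec\rho}([G,o_1,o_2],2^{-(r+1)})$ is contained in this preimage. Indeed, if $\vec\rho([G,o_1,o_2],[H,p_1,p_2]) \leq 2^{-(r+1)}$, then by Proposition \ref{elek_top_is_dist_top} (applied with radius $r+1$) we get $[B_G(o_1,r+1),o_1,o_2] = [B_H(p_1,r+1),p_1,p_2]$; applying Lemma \ref{iso_restricts_to_iso} with radius $r+1$ yields $[B_G(o_2,r),o_2,o_1] = [B_H(p_2,r),p_2,p_1]$, hence $[B_H(p_2,r),p_2,p_1] = \vec\alpha$ and $[H,p_1,p_2] \in \iota^{-1}(T_r(\vec\Gr,\vec\alpha))$. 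So the preimage is a union of such balls, hence open, and $\iota$ is continuous. Equivalently, this shows $\iota$ is uniformly continuous: $\vec\rho(a,b) \leq 2^{-(r+1)}$ forces $\vec\rho(\iota(a),\iota(b)) \leq 2^{-r}$.

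Finally, $\iota$ is a continuous bijection, and its inverse, being $\iota$ itself, is continuous, so $\iota$ is a self-homeomorphism of $\vec\Gr$. The only mildly delicate point is the unavoidable loss of one unit in radius when passing from balls centred at $o_1$ to balls centred at $o_2$ via Lemma \ref{iso_restricts_to_iso}; this is precisely why $\iota$ is merely uniformly continuous rather than an isometry, but it costs nothing for the present conclusion. Everything else is bookkeeping with the definitions and the lemmas already in hand.
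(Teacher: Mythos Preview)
Your proof is correct and follows essentially the same route as the paper: both arguments hinge on Lemma~\ref{iso_restricts_to_iso} to show that agreement of $(r+1)$-balls around the first root forces agreement of $r$-balls around the second, yielding the same one-step radius loss. The paper packages this directly as the estimate $\vec\rho(\iota(a),\iota(b)) \leq 2\,\vec\rho(a,b)$ (i.e.\ $\iota$ is $2$-Lipschitz), whereas you phrase it via preimages of basic open sets, but your concluding remark about uniform continuity is exactly the paper's Lipschitz bound.
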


\begin{proof}
If $[G,o_1,o_2],[H,p_1,p_2] \in \vec\Gr$ are distinct, then
\[
\vec\rho([G,o_1,o_2],[H,p_1,p_2]) = 2^{-r}
\]
and $[B_G(o_1,r),o_1,o_2] = [B_H(p_1,r),p_1,p_2]$. By Lemma \ref{iso_restricts_to_iso},
\[
[B_G(o_2,r - 1),o_2,o_1] = [B_H(p_2,r - 1),p_2,p_1],
\]
and so
\[
\vec\rho(\iota[G,o_1,o_2],\iota[H,p_1,p_2]) = \vec\rho([G,o_2,o_1],[H,p_2,p_1]) \leq 2^{-(r-1)} = 2 \cdot 2^{-r}.
\]
Hence $\iota$ is $2$-Lipschitz, and so it is continuous. Furthermore, $\iota$ is a self-homeomorphism because it is an involution.
\end{proof}

\begin{prop}\label{mu_conv_mu_arrow_conv}
If $(\mu_n)_{n=1}^\infty$ converges weakly to $\mu$, then $(\vec\mu_n)_{n=1}^\infty$ converges weakly to $\vec\mu$.
\end{prop}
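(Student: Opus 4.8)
The plan is to reduce the statement to convergence on a convenient generating family of Borel sets and then to invoke the continuity that has already been established in Proposition \ref{f_on_t_is_cont}. Concretely, I would appeal to Theorem \ref{kolmogorov_prokhorov} together with Proposition \ref{t_satisfies_kp_conditions}: the collection $\mc{A} = \{T_r(\vec\Gr,\vec\alpha) ~:~ r \in \NN,\ \vec\alpha \in \vec{U}_r\} \cup \{\emptyset\}$ is closed under finite intersections, and every open subset of $\vec\Gr$ is a (finite, hence countable) union of members of $\mc{A}$. Therefore it suffices to prove that $\vec\mu_n(A) \to \vec\mu(A)$ for every $A \in \mc{A}$.

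The case $A = \emptyset$ is trivial. For $A = T_r(\vec\Gr,\vec\alpha)$, the definitions of the measures $\vec\mu_n$ and $\vec\mu$ give $\vec\mu_n(A) = \int f_A \, d\mu_n$ and $\vec\mu(A) = \int f_A \, d\mu$. By Proposition \ref{f_on_t_is_cont} the function $f_A$ is Lipschitz, hence continuous, on $\Gr$; since $(\mu_n)_{n=1}^\infty$ converges weakly to $\mu$ by hypothesis, $\int f_A \, d\mu_n \to \int f_A \, d\mu$, which is precisely $\vec\mu_n(A) \to \vec\mu(A)$. Applying Theorem \ref{kolmogorov_prokhorov} then yields the weak convergence of $(\vec\mu_n)_{n=1}^\infty$ to $\vec\mu$.

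I do not expect a genuine obstacle here: all the substantive work — the continuity of $f_{T_r(\vec\Gr,\vec\alpha)}$ and the fact that the sets $T_r(\vec\Gr,\vec\alpha)$ form a well-behaved generating family — has already been carried out in Propositions \ref{f_on_t_is_cont} and \ref{t_satisfies_kp_conditions}. The only point needing a moment's care is the choice of generating family: one must test convergence against sets on which $f_A$ is continuous, i.e.\ the $T_r$-sets, rather than against arbitrary balls or Borel subsets of $\vec\Gr$, since $f_A$ need not be continuous for a general Borel set $A$. An essentially equivalent route, which I would mention as an alternative, is to use Theorem \ref{weak_conv_iff_char_func_conv} directly: because $\vec\Gr$ is a compact ultrametric space whose balls are, by Proposition \ref{elek_top_is_dist_top}, exactly the sets $T_r(\vec\Gr,\vec\alpha)$, weak convergence of $(\vec\mu_n)_{n=1}^\infty$ to $\vec\mu$ amounts to $\vec\mu_n(T_r(\vec\Gr,\vec\alpha)) \to \vec\mu(T_r(\vec\Gr,\vec\alpha))$ for all $r \in \NN$ and $\vec\alpha \in \vec{U}_r$, which is what the previous paragraph shows.
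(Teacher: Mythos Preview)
Your proposal is correct and follows essentially the same route as the paper's own proof: reduce via Proposition~\ref{t_satisfies_kp_conditions} and Theorem~\ref{kolmogorov_prokhorov} to checking $\vec\mu_n(T_r(\vec\Gr,\vec\alpha)) \to \vec\mu(T_r(\vec\Gr,\vec\alpha))$, and then use the continuity of $f_A$ from Proposition~\ref{f_on_t_is_cont} together with the assumed weak convergence $\mu_n \to \mu$. The alternative you mention through Theorem~\ref{weak_conv_iff_char_func_conv} and Proposition~\ref{elek_top_is_dist_top} is indeed equivalent, but the paper chooses the Kolmogorov--Prokhorov formulation.
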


\begin{proof}
By Proposition \ref{t_satisfies_kp_conditions} and Theorem \ref{kolmogorov_prokhorov}, it suffices to show that
\[
\vec\mu_n(T_r(\vec\Gr,\vec\alpha)) \to \vec\mu(T_r(\vec\Gr,\vec\alpha))
\]
for all $r \in \NN$ and $\vec\alpha \in \vec{U}_r$. By Proposition \ref{f_on_t_is_cont}, $f_A$ is continuous when $A = T_r(\vec\Gr,\vec\alpha)$. Then
\[
\vec\mu_n(A) = \int f_A ~d\mu_n \to \int f_A ~d\mu = \vec\mu(A)
\]
because $(\mu_n)_{n=1}^\infty$ converges weakly to $\mu$. Thus $(\vec\mu_n)_{n=1}^\infty$ converges weakly to $\vec\mu$.
\end{proof}

Finally, we arrive at our first main result. Using the technical propositions stated above, we proceed to demonstrate the following. The idea for the proof of the following theorem is due to a paper by David Aldous and J. Michael Steele \cite[p.\ 40]{tom}.

\begin{theo}\label{limits_preserve_inv_inv}
If $(\mu_n)_{n=1}^\infty$ is a sequence of involution invariant measures on $\Gr$ that converges weakly to a measure $\mu$ on $\Gr$, then $\mu$ is involution invariant.
\end{theo}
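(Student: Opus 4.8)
The plan is to reduce the statement to the continuity of pushforward under the homeomorphism $\iota$, together with the uniqueness of weak limits, so that essentially no new computation is needed. First I would apply Proposition \ref{mu_conv_mu_arrow_conv} to promote the hypothesis $\mu_n \to \mu$ weakly on $\Gr$ to the weak convergence $\vec\mu_n \to \vec\mu$ on $\vec\Gr$. Next, since $\iota$ is a continuous self-map of $\vec\Gr$ by Proposition \ref{inv_is_cont}, Proposition \ref{pushforward_is_continuous} immediately gives that $\iota_\ast(\vec\mu_n) \to \iota_\ast(\vec\mu)$ weakly.

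Now I would feed in the hypothesis that each $\mu_n$ is involution invariant, which by Definition \ref{defn_of_vec} means $\iota_\ast(\vec\mu_n) = \vec\mu_n$. Substituting this into the convergence just obtained yields $\vec\mu_n \to \iota_\ast(\vec\mu)$ weakly. Since we also have $\vec\mu_n \to \vec\mu$ weakly, and weak limits are unique, it follows that $\iota_\ast(\vec\mu) = \vec\mu$; that is, $\mu$ is involution invariant, as desired. For the uniqueness of weak limits on $\vec\Gr$, I would note that $\vec\Gr$ is a compact metric space, so continuous functions separate its probability measures; alternatively one can appeal directly to Theorem \ref{weak_conv_iff_char_func_conv} applied to the ultrametric $\vec\rho$, since two measures agreeing on all balls $B_{\vec\rho}(\vec x,\e)$ must coincide by Theorem \ref{kolmogorov_prokhorov}.

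I do not anticipate a genuine obstacle here: the one substantive ingredient, namely that the ``derived'' measures $\vec\mu_n$ converge weakly to $\vec\mu$, has already been isolated as Proposition \ref{mu_conv_mu_arrow_conv}, and the remaining argument is a short chain of citations ending in the uniqueness of limits. If anything, the only point deserving a sentence of care is the justification of that uniqueness, which is why I would include the brief remark above rather than leave it implicit.
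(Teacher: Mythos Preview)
Your proposal is correct and matches the paper's own proof essentially line for line: promote $\mu_n\to\mu$ to $\vec\mu_n\to\vec\mu$ via Proposition~\ref{mu_conv_mu_arrow_conv}, push forward by the continuous involution $\iota$ using Propositions~\ref{inv_is_cont} and~\ref{pushforward_is_continuous}, substitute $\iota_\ast(\vec\mu_n)=\vec\mu_n$, and conclude $\iota_\ast(\vec\mu)=\vec\mu$ by uniqueness of weak limits. The only addition in your write-up is the explicit justification of that uniqueness, which the paper leaves implicit.
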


\begin{proof}
For convenience, let $\mu = \lim_{n \to \infty} \mu_n$. By Proposition \ref{mu_conv_mu_arrow_conv}, $\vec\mu = \lim_{n \to \infty} \vec\mu_n$. Using Proposition \ref{inv_is_cont} with Proposition \ref{pushforward_is_continuous}, we see that $\iota_\ast(\vec\mu) = \lim_{n \to \infty} \iota_\ast(\vec\mu_n)$. Since $\mu_n$ is involution invariant for all positive integers $n$, it follows that $\iota_\ast(\vec\mu) = \lim_{n \to \infty} \vec\mu_n$, and so $\iota_\ast(\vec\mu) = \vec\mu$, which means $\mu$ is involution invariant.
\end{proof}

\begin{cor}\label{limits_preserve_unimodularity}
If $(\mu_n)_{n=1}^\infty$ is a sequence of unimodular measures on $\Gr$ that converges weakly to a measure $\mu$ on $\Gr$, then $\mu$ is unimodular.
\end{cor}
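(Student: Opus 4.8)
The plan is to reduce the statement immediately to the two results just established, namely the equivalence of unimodularity and involution invariance (Theorem~\ref{unimodular_iff_inv_inv}) and the fact that weak limits of involution invariant measures are involution invariant (Theorem~\ref{limits_preserve_inv_inv}). First I would note that, since each $\mu_n$ is unimodular, Theorem~\ref{unimodular_iff_inv_inv} guarantees that each $\mu_n$ is involution invariant. Hence $(\mu_n)_{n=1}^\infty$ is a sequence of involution invariant measures on $\Gr$ converging weakly to $\mu$, and Theorem~\ref{limits_preserve_inv_inv} applies directly to yield that $\mu$ is involution invariant. Invoking Theorem~\ref{unimodular_iff_inv_inv} once more, this time in the reverse direction, we conclude that $\mu$ is unimodular.

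There is essentially no obstacle to overcome here: all of the genuine work has already been carried out in the preceding subsections --- reconciling the two notions via the identities relating $\sum_{x \in N_G(o)} \chi_A[G,o,x]$ to the functions $f_A$ and $f_{\iota(A)}$ in Theorem~\ref{unimodular_iff_inv_inv}, and pushing the weak limit through the continuous maps $\mu \mapsto \vec\mu$ (Proposition~\ref{mu_conv_mu_arrow_conv}) and $\iota$ (Propositions~\ref{inv_is_cont} and~\ref{pushforward_is_continuous}) in Theorem~\ref{limits_preserve_inv_inv}. The only point requiring any care is that Theorem~\ref{unimodular_iff_inv_inv} is a genuine biconditional, so it is legitimate to apply it in both directions --- once to convert the hypothesis on the $\mu_n$ into involution invariance, and once to convert the conclusion about $\mu$ back into unimodularity --- and no hypotheses beyond those in the statement are needed.
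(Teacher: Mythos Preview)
Your proposal is correct and matches the paper's approach exactly: the paper's proof is a single sentence invoking Theorem~\ref{unimodular_iff_inv_inv}, with Theorem~\ref{limits_preserve_inv_inv} implicitly supplying the passage through involution invariance. You have simply spelled out both directions of that biconditional explicitly.
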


\begin{proof}
This follows immediately by Theorem \ref{unimodular_iff_inv_inv}.
\end{proof}
%
%
\section{Graphings}
In this section, the primary focus will be on discovering a potentially vast new source of examples of unimodular measures by showing that the law of a graphing is unimodular. Before doing so, the reader needs to know what a graphing is.

%
%
\subsection{Preliminaries}
For the purposes of this article, we will be using G\'abor Elek's definition of a graphing \cite{nolofg}. Although, as it is later shown, there is a more general notion.

\begin{defn}
Let $\mu$ be a measure on a Borel space $X$. A \emph{measurable graphing} is a tuple $\GG = (X,i_1,i_2,\ldots,i_k,\mu)$ where $i_j$ is a measure-preserving Borel involution of $X$ for each $j \in \{1,2,\ldots,k\}$.

The measurable graphing $\GG$ determines an equivalence relation $\sim_\GG$ on $X$ defined as follows: $x \sim_\GG y$ if and only if there is a subset $\{x_1,x_2,\ldots,x_m\} \subseteq X$ such that
\begin{enumerate}
\item[(i)] $x_1 = x$ and $x_m = y$, and

\item[(ii)] for each $i \in \{1,2,\ldots,m - 1\}$, there is a $j \in \{1,2,\ldots,k\}$ such that $x_{i+1} = i_j(x_i)$
\end{enumerate}
for all $(x,y) \in X \times X$. The \emph{leafgraph} of $\GG$ is a graph $\LL$ whose vertex set is $X$, and $x$ is adjacent to $y$ in $\LL$ precisely when $y = i_j(x)$ for some $j \in \{1,2,\ldots,k\}$.
\end{defn}

In passing, we mention the following straightforward fact that relates the equivalence relation $\sim_\GG$ to the leafgraph of $\GG$.

\begin{prop}
If $\GG = (X,i_1,i_2,\ldots,i_k,\mu)$ is a measurable graphing, then the equivalence classes of $\sim_\GG$ are the connected components of $\LL$. Specifically, $V(\LL_x) = [x]_{\sim_\GG}$ and $E(\LL_x) = \{yz ~:~ \exists j \in \{1,2,\ldots,k\} ~~ i_j(y) = z\}$ for all $x \in X$.
\end{prop}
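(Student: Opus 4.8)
The plan is to prove the two asserted equalities directly from the definitions, using the characterization of $\sim_\GG$ as a ``walk along the involutions'' and the definition of adjacency in $\LL$. The statement has two parts: first that the equivalence classes of $\sim_\GG$ coincide with the vertex sets of the connected components of $\LL$, and second the explicit description of the edge set of each component. Since a connected component $\LL_x$ is by definition the induced subgraph of $\LL$ on the vertices reachable from $x$ by a path in $\LL$, once the vertex-set claim is established the edge-set claim is essentially immediate: $E(\LL_x)$ consists of exactly those edges $yz$ of $\LL$ with $y, z \in V(\LL_x)$, and an edge of $\LL$ is by definition a pair $yz$ with $i_j(y) = z$ for some $j$.

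First I would fix $x \in X$ and show $[x]_{\sim_\GG} \subseteq V(\LL_x)$. Suppose $y \sim_\GG x$; then there is a finite sequence $x = x_1, x_2, \ldots, x_m = y$ with $x_{i+1} = i_{j_i}(x_i)$ for suitable indices $j_i$. By the definition of adjacency in $\LL$, each consecutive pair $x_i x_{i+1}$ is an edge of $\LL$ (note $x_i \neq x_{i+1}$ may fail if $i_{j_i}$ fixes $x_i$, but in that case we can simply delete the repeat from the sequence; since $\LL$ is simple, a step with $x_{i+1} = x_i$ contributes nothing), so $x_1 x_2 \cdots x_m$ is a walk in $\LL$ from $x$ to $y$, hence $y$ lies in the same connected component as $x$, i.e. $y \in V(\LL_x)$. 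Conversely, if $y \in V(\LL_x)$, there is a path $x = v_0, v_1, \ldots, v_n = y$ in $\LL$; each edge $v_{i} v_{i+1}$ of $\LL$ means $v_{i+1} = i_{j}(v_i)$ for some $j$ by definition of $E(\LL)$, which is precisely condition (ii) in the definition of $\sim_\GG$, so the same sequence witnesses $x \sim_\GG y$. This gives $V(\LL_x) \subseteq [x]_{\sim_\GG}$, and the two inclusions together prove $V(\LL_x) = [x]_{\sim_\GG}$.

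For the edge-set claim, $\LL_x$ is the subgraph of $\LL$ induced by $V(\LL_x)$, so $E(\LL_x) = \{ yz \in E(\LL) : y, z \in V(\LL_x) \}$. An arbitrary edge of $\LL$ has the form $yz$ with $i_j(y) = z$ for some $j \in \{1, \ldots, k\}$. If such an edge has both endpoints in $V(\LL_x)$ it is in $E(\LL_x)$; conversely any edge of $\LL_x$ is an edge of $\LL$, hence of that form, and has both endpoints in the component. One should note that whenever $i_j(y) = z$ with $y \in V(\LL_x)$, automatically $z \in V(\LL_x)$ as well (since $y$ and $z$ are then adjacent in $\LL$), so the condition ``both endpoints in $V(\LL_x)$'' can be replaced by ``$y \in V(\LL_x)$'' — this matches the form $E(\LL_x) = \{ yz : \exists j \in \{1, \ldots, k\} ~~ i_j(y) = z \}$ once one reads the right-hand side as ranging over $y \in V(\LL_x)$ (or observes that edges with neither endpoint in the component simply do not arise once the component is fixed), and the symmetry $i_j(y) = z \Leftrightarrow i_j(z) = y$, valid because each $i_j$ is an involution, shows the description is symmetric in $y$ and $z$ as an edge set should be.

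I do not expect any genuine obstacle here; the proposition is explicitly flagged as ``straightforward.'' The only points requiring a little care are bookkeeping rather than mathematics: handling the possibility that some involution $i_j$ fixes a point (so a ``step'' in the $\sim_\GG$-walk is not an edge of the simple graph $\LL$), and being precise about the range of the quantifiers in the stated formula for $E(\LL_x)$ so that the claimed equality of edge sets is literally correct. Neither affects the substance of the argument.
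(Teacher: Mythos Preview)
Your argument is correct and is exactly the natural unpacking of the definitions; the paper itself does not give a proof at all, merely flagging the result as a ``straightforward fact'' stated without argument. Your handling of the minor bookkeeping issues (fixed points of involutions, and the implicit restriction of the quantifier over $y$ to $V(\LL_x)$ in the edge-set formula) is appropriate and more careful than anything the paper supplies.
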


Next we define the law of a graphing, which is similar to the law of a finite graph seen previously.

\begin{defn}
Let $\GG = (X,i_1,i_2,\ldots,i_k,\mu)$ be a measurable graphing. Denote by $\LL$ the leafgraph of $\GG$. The \emph{law of $\GG$} is the probability measure $\Psi(\GG)$ on $\Gr$ defined by
\[
\Psi(\GG)(T_r(\Gr,\alpha)) = \mu(\{x \in X ~:~ [B_\LL(x,r),x] = \alpha\})
\]
for all $r \in \NN$ and $\alpha \in U_r$.
\end{defn}

By writing $\Psi(\LL)$ instead of $\Psi(\GG)$, this definition of a law expands the domain of the function $\Psi$ to include all leafgraphs. However, we will opt to use $\Psi(\GG)$ instead.

The next proposition demonstrates why the definition of the law of a graphing is consistent with that of the law of a finite graph.

\begin{prop}
The law $\Psi(G)$ of a graph $G \in \Graph$ is the law of the measurable graphing $\GG = (V(G),\{i_{xy} ~:~ xy \in E(G)\},\mu)$ where $\mu$ is the uniform measure on $V(G)$, and $i_{xy} : V(G) \to V(G)$ maps $x$ to $y$, $y$ to $x$, and fixes the other vertices.
\end{prop}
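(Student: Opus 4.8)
The plan is to show that $\Psi(G)$ and $\Psi(\GG)$ are literally the same probability measure on $\Gr$, by verifying that they assign the same value to every set $T_r(\Gr,\alpha)$. Since the leafgraph $\LL$ of $\GG$ turns out to equal $G$, the law of $\GG$ becomes an exercise in unwinding definitions, and the two values both reduce to $|T_r(G,\alpha)|/|V(G)|$; one then appeals to the fact that such values determine a Borel measure on $\Gr$ uniquely.

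First I would check that $\GG$ is a measurable graphing in the sense of the definition. The vertex set $V(G)$ is a finite set, hence a Borel space on which every function is Borel; each $i_{xy}$ is an involution since a transposition is; and each $i_{xy}$, being a permutation of $V(G)$, preserves the uniform measure $\mu$. Next I would identify $\LL$ with $G$ on the common vertex set $V(G)$. Two vertices $x,y$ are adjacent in $\LL$ exactly when $y = i_{ab}(x)$ for some edge $ab \in E(G)$; as $i_{ab}$ swaps $a$ with $b$ and fixes every other vertex, the relation $y = i_{ab}(x)$ with $y \neq x$ forces $\{x,y\} = \{a,b\} \in E(G)$, while conversely $xy \in E(G)$ gives $y = i_{xy}(x)$. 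Thus $\LL$ and $G$ have the same vertices and the same edges, so $B_\LL(x,r) = B_G(x,r)$ for all $x \in V(G)$ and $r \in \NN$.

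With this identification in hand, for $r \in \NN$ and $\alpha \in U_r$ I would compute straight from the definition of the law of a graphing:
\[
\Psi(\GG)(T_r(\Gr,\alpha)) = \mu(\{x \in V(G) : [B_\LL(x,r),x] = \alpha\}) = \mu(T_r(G,\alpha)) = \frac{|T_r(G,\alpha)|}{|V(G)|},
\]
the last step because $\mu$ is uniform. On the other hand, Lemma \ref{law_eval_on_set} gives $\Psi(G)(T_r(\Gr,\alpha)) = |T_r(G,\alpha)|/|V(G)|$. Hence $\Psi(G)$ and $\Psi(\GG)$ agree on $T_r(\Gr,\alpha)$ for every $r$ and every $\alpha$.

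To conclude, I would note that the family $\{T_r(\Gr,\alpha) : r \in \NN,\ \alpha \in U_r\} \cup \{\emptyset\}$ is closed under finite intersections (in the ultrametric space $(\Gr,\rho)$ two such sets are either disjoint or nested, by Proposition \ref{intersection_in_ultrametric}), contains $\Gr$ itself (taking $r = 0$, whose unique rooted ball is the one-vertex graph), and is a basis for $\tau$ by Corollary \ref{t_sets_are_bases}, hence generates $\mc{B}(\Gr)$. Two probability measures on $\Gr$ that agree on such a family coincide; equivalently, this family is exactly the data on which $\Psi(\GG)$ was defined, so $\Psi(G)$ satisfies the defining relation of $\Psi(\GG)$, and therefore $\Psi(G) = \Psi(\GG)$. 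The only step that asks for any care is the identification $\LL = G$ together with the (routine) appeal to uniqueness of measures; everything else is bookkeeping with the definitions.
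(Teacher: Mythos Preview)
Your proposal is correct and follows essentially the same route as the paper: compute both measures on the basic sets $T_r(\Gr,\alpha)$ and see that each equals $|T_r(G,\alpha)|/|V(G)|$. The only differences are cosmetic: you cite Lemma~\ref{law_eval_on_set} for the value of $\Psi(G)$ on $T_r(\Gr,\alpha)$, whereas the paper re-derives this directly from the orbit definition of $\Psi(G)$; and you spell out the identification $\LL=G$ and the $\pi$-system uniqueness step, both of which the paper leaves implicit.
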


\begin{proof}
Since $\Aut(G)$ partitions the vertex set of $G$, $V(G) = \bigsqcup_{j=1}^k \Aut(G)j$. Furthermore, $|\Aut(G)j| \cdot \chi_A[G_j,j] = |\{x \in \Aut(G)j ~:~ [G_j,j] \in A\}|$, and $[G_x,x] = [G_j,j]$ because $x$ and $j$ are in the same orbit. Then
\begin{align*}
\Psi(G)(A) &= \sum_{j=1}^k \frac{|\Aut(G)j| \cdot \chi_A[G_j,j]}{|V(G)|}\\
           &= \sum_{j=1}^k \frac{|\{x \in \Aut(G)j ~:~ [G_x,x] \in A\}|}{|V(G)|}\\
           &= \frac{\left|\bigsqcup_{j=1}^k \{x \in \Aut(G)j ~:~ [G_x,x] \in A\}\right|}{|V(G)|}\\
           &= \frac{|\{x \in V(G) ~:~ [G_x,x] \in A\}|}{|V(G)|}
\end{align*}
for all Borel subsets $A$ of $\Gr$. In particular,
\[
\Psi(G)(T_r(\Gr,\alpha)) = \mu(\{x \in V(G) ~:~ [B_{G_x}(x,r),x] = \alpha\})
\]
for all $r \in \NN$ and $\alpha \in U_r$. Hence $\Psi(G) = \Psi(\GG)$.
\end{proof}

To bridge the gap between the law of $\GG$ and $\mu$, the reader is encouraged to study the following proposition, which links the two measures.

\begin{prop}\label{law_to_mu}
Let $\GG = (X,i_1,i_2,\ldots,i_k,\mu)$ be a measurable graphing. If $g : \Gr \to \RR$ is a Borel function, then
\[
\int_\Gr g ~d\Psi(\GG) = \int_X g[\LL_x,x] ~d\mu(x).
\]
\end{prop}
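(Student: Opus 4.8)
The plan is to realize $\Psi(\GG)$ as a pushforward of $\mu$ and then apply the change-of-variables formula. Write $\LL$ for the leafgraph of $\GG$ and define $\Phi : X \to \Gr$ by $\Phi(x) = [\LL_x,x]$.

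First I would check that $\Phi$ is Borel measurable. For every $r \in \NN$ and $\alpha \in U_r$,
\[
\Phi^{-1}(T_r(\Gr,\alpha)) = \{x \in X ~:~ [B_\LL(x,r),x] = \alpha\},
\]
and this set is $\mu$-measurable — indeed, its measurability is already implicit in the definition of the law $\Psi(\GG)$, since $\mu$ is applied to it there. (When $X$ carries a metric, as in our standing conventions, one can also see it directly: every vertex of $B_\LL(x,r)$ has the form $w(x)$ for one of finitely many words $w$ of length at most $r$ in $i_1,\ldots,i_k$, so the rooted isomorphism type of $B_\LL(x,r)$ is determined by which of the finitely many equalities $w(x) = w'(x)$ and adjacencies $i_j(w(x)) = w'(x)$ hold; each of these is the preimage of the closed diagonal of $X \times X$ under a Borel map, so $\Phi^{-1}(T_r(\Gr,\alpha))$ is a finite Boolean combination of Borel sets.) Because $\{T_r(\Gr,\alpha) ~:~ r \in \NN ~;~ \alpha \in U_r\}$ is a basis for $\tau$ by Corollary \ref{t_sets_are_bases} and $\Gr$ is second countable, these sets generate $\mc{B}(\Gr)$; since the family of subsets of $\Gr$ with Borel $\Phi$-preimage forms a $\sigma$-algebra, $\Phi$ is Borel.

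Next I would show that $\Psi(\GG) = \Phi_\ast(\mu)$. By the definition of the law of a graphing together with the identity above,
\[
\Psi(\GG)(T_r(\Gr,\alpha)) = \mu(\Phi^{-1}(T_r(\Gr,\alpha))) = \Phi_\ast(\mu)(T_r(\Gr,\alpha))
\]
for all $r \in \NN$ and $\alpha \in U_r$. The collection $\{T_r(\Gr,\alpha)\} \cup \{\emptyset\}$ is closed under finite intersections by Proposition \ref{t_satisfies_kp_conditions}, it generates $\mc{B}(\Gr)$, and it contains $\Gr$ itself, since $U_0$ consists of the single element $\alpha_0$ (the one-vertex rooted graph) and $T_0(\Gr,\alpha_0) = \Gr$. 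Two probability measures agreeing on such a $\pi$-system coincide, so $\Psi(\GG) = \Phi_\ast(\mu)$.

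Finally, the asserted identity is just the change-of-variables formula for a pushforward: it holds for $g = \chi_B$ with $B \in \mc{B}(\Gr)$ by the previous paragraph (both sides equal $\mu(\Phi^{-1}(B))$, using $\chi_B \circ \Phi = \chi_{\Phi^{-1}(B)}$), extends to nonnegative simple functions by linearity, to nonnegative Borel $g$ by monotone convergence, and to arbitrary Borel $g$ by splitting $g = g^+ - g^-$; in every case
\[
\int_\Gr g ~d\Psi(\GG) = \int_\Gr g ~d\Phi_\ast(\mu) = \int_X (g \circ \Phi) ~d\mu = \int_X g[\LL_x,x] ~d\mu(x).
\]
I expect the measurability of $\Phi$ to be the only point really requiring attention; granting that, the remainder is the standard bookkeeping about pushforward measures and $\pi$-systems.
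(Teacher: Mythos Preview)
Your proof is correct and follows essentially the same route as the paper: both define the map $x \mapsto [\LL_x,x]$, identify $\Psi(\GG)$ as the pushforward of $\mu$ under it by checking agreement on the sets $T_r(\Gr,\alpha)$, and then invoke the change-of-variables formula. You supply more detail on measurability and on why agreement on the basis forces equality of measures than the paper does; the only slip is that Proposition~\ref{t_satisfies_kp_conditions} is stated for $\vec\Gr$ rather than $\Gr$, though the analogous fact for $\Gr$ follows by the same argument (or directly from Proposition~\ref{intersection_in_ultrametric}).
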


\begin{proof}
Define the function $q : (X,\mu) \to (\Gr,\Psi(\GG))$ by $q(x) = [\LL_x,x]$ for all $x \in X$. Observe that
\[
q(y) \in T_r(\Gr,\alpha) ~~ \Leftrightarrow ~~ [\LL_y,y] \in T_r(\Gr,\alpha) ~~ \Leftrightarrow ~~ [B_{\LL_y}(y,r),y] = \alpha,
\]
which means
\[
q^{-1}(T_r(\Gr,\alpha)) = \{x \in X ~:~ [B_{\LL_x}(x,r),x] = \alpha\}.
\]
Furthermore, $B_\LL(x,r) = B_{\LL_x}(x,r)$. Then
\[
\mu(q^{-1}(T_r(\Gr,\alpha))) = \mu(\{x \in X ~:~ [B_{\LL_x}(x,r),x] = \alpha\}) = \Psi(\GG)(T_r(\Gr,\alpha))
\]
for all $r \in \NN$ and $\alpha \in U_r$, and so $q_\ast(\mu) = \Psi(\GG)$. Hence
\[
\int_\Gr g ~d\Psi(\GG) = \int_\Gr g ~dq_\ast(\mu) = \int_X (g \circ q) ~d\mu,
\]
as required.
\end{proof}

Although the following result was shown before in this author's Honours project \cite{wcolofg}, the following argument presents another, more suitable, viewpoint.

\begin{prop}
If $G \in \Graph$, then $\Psi(G)$ is unimodular.
\end{prop}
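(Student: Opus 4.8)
The plan is to exploit the graphing viewpoint just assembled so that integration against $\Psi(G)$ becomes an average over $V(G)$, and then to settle the matter with a one-line edge-reversal bijection. By the earlier identification of $\Psi(G)$ with the law of the graphing $\GG=(V(G),\{i_{xy}:xy\in E(G)\},\mu)$, where $\mu$ is the uniform measure on $V(G)$ and the leafgraph $\LL$ satisfies $B_\LL(x,r)=B_{G_x}(x,r)$, Proposition \ref{law_to_mu} gives, for every Borel function $g:\Gr\to\RR$,
\[
\int_\Gr g\,d\Psi(G)=\int_{V(G)} g[G_x,x]\,d\mu(x)=\frac{1}{|V(G)|}\sum_{x\in V(G)} g[G_x,x];
\]
and since $\Psi(G)$ is supported on the finitely many points $[G_x,x]$, this identity in fact holds for an arbitrary real-valued $g$, so no measurability worries arise when the function $f$ below happens to be unbounded. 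This is the ``more suitable viewpoint'': integration against $\Psi(G)$ is nothing but averaging over the vertices of $G$.

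Now fix a nonnegative Borel function $f$ on $\vec\Gr$. Feeding $g[H,o]=\sum_{x\in N_H(o)} f[H,x,o]$ into the displayed formula turns the left side of the unimodularity equation into
\[
\frac{1}{|V(G)|}\sum_{o\in V(G)}\,\sum_{x\in N_G(o)} f[G_o,x,o],
\]
where I have used $N_{G_o}(o)=N_G(o)$; feeding in $g[H,o]=\sum_{x\in N_H(o)} f[H,o,x]$ turns the right side into $\frac{1}{|V(G)|}\sum_{o\in V(G)}\sum_{x\in N_G(o)} f[G_o,o,x]$. It remains only to prove that these two finite sums are equal.

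Both are indexed by the set $D=\{(o,x):o\in V(G),\ x\in N_G(o)\}$ of directed edges of $G$, and $(o,x)\mapsto(x,o)$ is an involution of $D$. Performing this substitution in the first sum and using that adjacent vertices share a connected component (so $G_o=G_x$), every term $f[G_o,x,o]$ is carried to a term $f[G_o,o,x]$ attached to a distinct element of $D$; hence the two sums agree, and $\Psi(G)$ is unimodular. I expect no genuine obstacle in this finite case — the only thing to watch is keeping the component-dependence of the root graph straight across the reindexing — which is precisely why it serves as the template for the general graphing to come. (Alternatively, one may conclude via Theorem \ref{unimodular_iff_inv_inv}: the averaging formula yields $\vec\mu(A)=\frac{1}{|V(G)|}\,|\{(o,x)\in D:[G_o,o,x]\in A\}|$ for $\mu=\Psi(G)$, and edge-reversal matches this count with $\vec\mu(\iota(A))$, whence $\iota_\ast(\vec\mu)=\vec\mu$.)
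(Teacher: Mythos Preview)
Your proof is correct and follows essentially the same route as the paper: both reduce the unimodularity equation for $\Psi(G)$ to the double sum $\frac{1}{|V(G)|}\sum_{o\in V(G)}\sum_{x\in N_G(o)} f[G_o,\cdot,\cdot]$ and then finish with the edge-reversal bijection on the symmetric set of directed edges together with $G_o=G_x$ for adjacent $o,x$. The only cosmetic difference is that you obtain the vertex-averaging formula via the graphing identification and Proposition~\ref{law_to_mu}, whereas the paper writes it down directly from the definition of $\Psi(G)$.
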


\begin{proof}
Define the relation $S = \{(x,y) \in V(G) \times V(G) ~:~ xy \in E(G)\}$. Observe that $S$ is symmetric; that is, $(x,y) \in S$ if and only if $(y,x) \in S$. Furthermore, $xy \in E(G)$ if and only if $y \in N_G(x)$, and $N_G(x) = N_{G_x}(x)$. Then
\begin{align*}
\int \sum_{y \in N_H(x)} f[H,x,y] ~d\Psi(G)[H,x] &= \frac{1}{|V(G)|} \sum_{x \in V(G)} \sum_{y \in N_G(x)} f[G_x,x,y]\\
                                                 &= \frac{1}{|V(G)|} \sum_{(x,y) \in S} f[G_x,x,y]\\
                                                 &= \frac{1}{|V(G)|} \sum_{(y,x) \in S} f[G_x,x,y]\\
                                                 &= \frac{1}{|V(G)|} \sum_{(y,x) \in S} f[G_y,x,y]\\
                                                 &= \frac{1}{|V(G)|} \sum_{y \in V(G)} \sum_{x \in N_G(y)} f[G_y,x,y]\\
                                                 &= \int \sum_{x \in N_H(y)} f[H,x,y] ~d\Psi(G)[H,y]
\end{align*}
where the fourth equality holds because $G_x = G_y$ whenever $x$ is adjacent to $y$.
\end{proof}

Consider the measurable graphing $\GG = (X,i_1,i_2,\ldots,i_k,\mu)$ whose leafgraph is $\LL$. Let $S_\LL = \{(x,y) \in X \times X ~:~ xy \in E(\LL)\}$ and
\[
\vec\mu(B) = \int_X |\{y \in N_\LL(x) ~:~ (x,y) \in B\}| ~d\mu(x)
\]
for all Borel subsets $B$ of $S_\LL$. For convenience, we will use $\iota$ to denote two different functions. The reader should already be familiar with the first of these functions from Definition \ref{defn_of_vec}. Let
\begin{align*}
\iota : \vec\Gr &\to \vec\Gr\\
        [G,x,y] &\mapsto [G,y,x]
\end{align*}
and
\begin{align*}
\iota : S_\LL &\to S_\LL\\
        (x,y) &\mapsto (y,x),
\end{align*}
which are both involutions. As for measures on $\Gr$, there is a similar notion of involution invariance for measures on $X$.

\begin{defn}
A measure $\mu$ on $X$ is \emph{involution invariant} if $\iota_\ast(\vec\mu) = \vec\mu$.
\end{defn}

%
%
\subsection{Laws of graphings are unimodular}
With the basic tools in hand, we may now construct a proof that laws of graphings, when dealing with unimodularity, behave in the same way as laws of finite graphs.

For the remainder of this section, let $B_A = \{(x,y) \in S_\LL ~:~ [\LL_x,x,y] \in A\}$ and $A_B = \{[G,x,y] \in \vec\Gr ~:~ (x,y) \in B\}$ for all Borel subsets $A$ of $\vec\Gr$ and $B$ of $S_\LL$.

\begin{prop}\label{directed_edge_facts}
If $A$ is a Borel subset of $\vec\Gr$, then $\chi_A[\LL_x,x,y] = \chi_{B_A}(x,y)$ for all $(x,y) \in S_\LL$. If $B$ is a Borel subset of $S_\LL$, then $\chi_B(x,y) = \chi_{A_B}[\LL_x,x,y]$ for all $(x,y) \in S_\LL$. Furthermore,
\[
f_A[\LL_x,x] := |\{y \in N_\LL(x) ~:~ [\LL_x,x,y] \in A\}| = \sum_{y \in N_\LL(x)} \chi_A[\LL_x,x,y]
\]
and
\[
|\{y \in N_\LL(x) ~:~ (x,y) \in B\}| = \sum_{y \in N_\LL(x)} \chi_B(x,y)
\]
for all Borel subsets $A$ of $\vec\Gr$ and $B$ of $S_\LL$.
\end{prop}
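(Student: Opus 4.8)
The statement is essentially a bundle of bookkeeping identities that unpack the definitions of $B_A$, $A_B$, and $f_A$. The plan is to verify each of the four claims directly, treating the characteristic-function identities first and deriving the counting identities from them by summing over the neighbourhood $N_\LL(x)$.

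First I would fix a Borel subset $A$ of $\vec\Gr$ and a pair $(x,y) \in S_\LL$. By definition of $B_A$, we have $(x,y) \in B_A$ if and only if $[\LL_x,x,y] \in A$; these two conditions are logically equivalent, so their indicator functions agree, which gives $\chi_{B_A}(x,y) = \chi_A[\LL_x,x,y]$. The argument for the second identity is the mirror image: fixing a Borel subset $B$ of $S_\LL$, the membership $[\LL_x,x,y] \in A_B$ holds exactly when $(x,y) \in B$ by the definition of $A_B$, so $\chi_{A_B}[\LL_x,x,y] = \chi_B(x,y)$. (One should note in passing that $[\LL_x,x,y]$ does lie in $\vec\Gr$ whenever $(x,y) \in S_\LL$, since $xy \in E(\LL)$ forces $y \in N_\LL(x)$ and $\LL_x$ is a connected component of bounded degree; this is what makes $A_B$ and $B_A$ well-defined subsets of the appropriate spaces.)

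For the two counting identities, the key observation is that for a fixed $x$ the sum $\sum_{y \in N_\LL(x)}$ runs over a finite set (since $\deg_\LL(x) \le \Delta$), so all the manipulations are legitimate finite sums. For a Borel subset $A$ of $\vec\Gr$, the set $\{y \in N_\LL(x) ~:~ [\LL_x,x,y] \in A\}$ has cardinality $\sum_{y \in N_\LL(x)} \chi_A[\LL_x,x,y]$, since each $y$ in the neighbourhood contributes $1$ to the sum precisely when $[\LL_x,x,y] \in A$ and $0$ otherwise; this is exactly the asserted formula for $f_A[\LL_x,x]$. The final identity is obtained the same way with $B$ in place of $A$: the cardinality of $\{y \in N_\LL(x) ~:~ (x,y) \in B\}$ equals $\sum_{y \in N_\LL(x)} \chi_B(x,y)$.

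There is no real obstacle here — the proposition is a definitional lemma recording notation to be used in the subsequent argument that laws of graphings are unimodular. The only point requiring a moment's care is the implicit well-definedness remark above, namely that $(x,y) \in S_\LL$ guarantees $[\LL_x,x,y]$ is a genuine element of $\vec\Gr$ (so that writing $\chi_A[\LL_x,x,y]$ makes sense) and that $y \in N_\LL(x)$, which is what lets the neighbourhood sums and the set-builder definitions line up. Once that is observed, each of the four equalities is immediate from "a characteristic function is $1$ on the set and $0$ off it."
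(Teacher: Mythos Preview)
Your proposal is correct; indeed, the paper states this proposition without proof, treating it as immediate from the definitions, and your argument is precisely the routine verification one would supply.
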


\begin{lem}\label{vec_mu_is_int_sum}
If $B$ is a Borel subset of $S_\LL$, then
\[
\vec\mu(B) = \int_X \sum_{y \in N_\LL(x)} \chi_B(x,y) ~d\mu(x).
\]
\end{lem}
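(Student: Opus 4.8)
The statement to prove is
\[
\vec\mu(B) = \int_X \sum_{y \in N_\LL(x)} \chi_B(x,y) ~d\mu(x)
\]
for every Borel subset $B$ of $S_\LL$. The plan is simply to unwind the definition of $\vec\mu$ given just above the statement and then rewrite the integrand using Proposition~\ref{directed_edge_facts}. Recall that $\vec\mu$ was defined by
\[
\vec\mu(B) = \int_X |\{y \in N_\LL(x) ~:~ (x,y) \in B\}| ~d\mu(x).
\]
So the only thing that needs to be said is that the integrand $|\{y \in N_\LL(x) ~:~ (x,y) \in B\}|$ equals $\sum_{y \in N_\LL(x)} \chi_B(x,y)$, and this is exactly the last displayed identity in Proposition~\ref{directed_edge_facts}.

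Concretely, I would write: let $B$ be a Borel subset of $S_\LL$ and fix $x \in X$. By the final identity of Proposition~\ref{directed_edge_facts}, $|\{y \in N_\LL(x) ~:~ (x,y) \in B\}| = \sum_{y \in N_\LL(x)} \chi_B(x,y)$. Substituting this into the definition of $\vec\mu(B)$ yields the claimed formula. One should note in passing that the sum over $y \in N_\LL(x)$ is finite, since $\deg_\LL(x) \leq \Delta$ (because every vertex of $\LL$ has degree at most $k \leq \Delta$, or more precisely because we only work with graphings whose leafgraphs have degree bounded by $\Delta$), so there is no issue of convergence; the integrand is a nonnegative integer-valued measurable function bounded by $\Delta$, and the integral makes sense.

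There is essentially no obstacle here — the proposition is a bookkeeping lemma recording a change of notation, and its proof is a single substitution. If anything, the only point requiring a moment's care is measurability of the integrand $x \mapsto \sum_{y \in N_\LL(x)} \chi_B(x,y)$ as a function on $X$, but this is already implicitly assumed in the very definition of $\vec\mu(B)$ (otherwise that definition would be meaningless), so it does not need to be reproved here. I would therefore keep the proof to two or three lines: invoke Proposition~\ref{directed_edge_facts}, substitute into the definition of $\vec\mu$, and conclude.
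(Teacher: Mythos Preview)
Your proposal is correct and matches the paper's own proof essentially line for line: the paper also simply invokes Proposition~\ref{directed_edge_facts} to replace $|\{y \in N_\LL(x) : (x,y) \in B\}|$ by $\sum_{y \in N_\LL(x)} \chi_B(x,y)$ inside the defining integral of $\vec\mu(B)$. Your additional remarks on boundedness and measurability are fine but not needed, since as you note they are already implicit in the definition of $\vec\mu$.
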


\begin{proof}
By Proposition \ref{directed_edge_facts},
\[
\int_X |\{y \in N_\LL(x) ~:~ (x,y) \in B\}| ~d\mu(x) = \int_X \sum_{y \in N_\LL(x)} \chi_B(x,y) ~d\mu(x),
\]
and the result follows.
\end{proof}

\begin{lem}\label{iota_preserves_edge_facts}
If $A$ and $B$ are Borel subsets of $\vec\Gr$ and $S_\LL$, respectively, then $A_{\iota(B)} = \iota(A_B)$ and $B_{\iota(A)} = \iota(B_A)$.
\end{lem}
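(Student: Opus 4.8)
The plan is to prove both equalities by a direct unwinding of the definitions of $A_B$, $B_A$, and the two involutions $\iota$, using only the fact that each $\iota$ squares to the identity together with the observation that adjacent vertices lie in a common connected component of $\LL$.

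For the first equality, $A_{\iota(B)} = \iota(A_B)$, I would argue at the level of elements. Given $[G,x,y] \in \vec\Gr$, membership in $A_{\iota(B)}$ means $(x,y) \in \iota(B)$, which — since $\iota : S_\LL \to S_\LL$ is an involution — is equivalent to $(y,x) \in B$. On the other side, $[G,x,y] \in \iota(A_B)$ means $\iota[G,x,y] = [G,y,x] \in A_B$, which by definition of $A_B$ means $(y,x) \in B$. Since the two conditions coincide, the sets are equal. This direction requires no facts about the leafgraph beyond the definitions.

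For the second equality, $B_{\iota(A)} = \iota(B_A)$, I would again compare elements $(x,y) \in S_\LL$. Membership in $B_{\iota(A)}$ unwinds to $[\LL_x,x,y] \in \iota(A)$, equivalently $\iota[\LL_x,x,y] = [\LL_x,y,x] \in A$. Membership in $\iota(B_A)$ unwinds to $(y,x) \in B_A$ (using that $S_\LL$ is symmetric, so $(y,x) \in S_\LL$), equivalently $[\LL_y,y,x] \in A$. The one point needing care — and the only real step here — is the identification $[\LL_x,y,x] = [\LL_y,y,x]$ in $\vec\Gr$: this holds because $(x,y) \in S_\LL$ means $xy \in E(\LL)$, so $x$ and $y$ lie in the same connected component, i.e.\ $\LL_x = \LL_y$. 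This is the same device used in the earlier proof that $\Psi(G)$ is unimodular (``the fourth equality holds because $G_x = G_y$ whenever $x$ is adjacent to $y$''). Once this identification is in place, the two membership conditions agree and the equality follows.

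I do not anticipate a genuine obstacle; the mild subtlety is bookkeeping with the two overloaded uses of $\iota$ and making sure Proposition \ref{directed_edge_facts} (or just the raw definitions) is invoked to rewrite $\chi$-conditions as set-membership conditions. I would state explicitly at the outset that both $\iota$'s are involutions, so that $z \in \iota(S) \Leftrightarrow \iota(z) \in S$, and then the whole argument is a two-line computation per equality.
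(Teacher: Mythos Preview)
Your proposal is correct. Note, however, that the paper itself states this lemma without proof---it is treated as a routine verification left to the reader, in the same spirit as Proposition~\ref{directed_edge_facts}. Your element-chasing argument is exactly the kind of unwinding the paper implicitly expects, and you have correctly isolated the one nontrivial point: in the second equality one must use that $(x,y) \in S_\LL$ forces $\LL_x = \LL_y$, so that $[\LL_x,y,x] = [\LL_y,y,x]$. That is indeed the same device invoked in the proof that $\Psi(G)$ is unimodular. There is nothing to compare against, and nothing to add; your write-up would serve perfectly well as the omitted proof.
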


\begin{theo}\label{law_is_inv_inv_iff_mu_is}
Let $\GG = (X,i_1,i_2,\ldots,i_k,\mu)$ be a measurable graphing. The law $\Psi(\GG)$ is involution invariant if and only if $\mu$ is involution invariant.
\end{theo}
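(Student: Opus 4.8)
The plan is to translate the statement ``$\Psi(\GG)$ is involution invariant'' into a statement about the pushforward map $q : X \to \Gr$, $q(x) = [\LL_x,x]$ from Proposition~\ref{law_to_mu}, together with the edge-level analogue. Concretely, I would introduce the map $\vec q : S_\LL \to \vec\Gr$ sending $(x,y) \mapsto [\LL_x,x,y]$, observe that it is well-defined (Proposition~\ref{directed_edge_facts} already records the key compatibilities $\chi_A[\LL_x,x,y] = \chi_{B_A}(x,y)$ and $\chi_B(x,y) = \chi_{A_B}[\LL_x,x,y]$, so $\vec q^{-1}(A) = B_A$ and $\vec q$ pushes forward the $S_\LL$-side data to the $\vec\Gr$-side data), and prove the bridging identity $\vec q_\ast(\vec\mu) = \vec{\Psi(\GG)}$, where on the left $\vec\mu$ is the measure on $S_\LL$ defined just before the subsection and on the right $\vec{\Psi(\GG)}$ is the measure on $\vec\Gr$ associated to $\Psi(\GG)$ as in Definition~\ref{defn_of_vec}. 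This is the analogue of the identity $q_\ast(\mu) = \Psi(\GG)$, one level up on the edge spaces.

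To establish $\vec q_\ast(\vec\mu) = \vec{\Psi(\GG)}$, I would evaluate both sides on an arbitrary Borel set $A \subseteq \vec\Gr$. The left side is $\vec\mu(\vec q^{-1}(A)) = \vec\mu(B_A)$, which by Lemma~\ref{vec_mu_is_int_sum} equals $\int_X \sum_{y \in N_\LL(x)} \chi_{B_A}(x,y)\,d\mu(x)$; using Proposition~\ref{directed_edge_facts} this rewrites as $\int_X \sum_{y \in N_\LL(x)} \chi_A[\LL_x,x,y]\,d\mu(x) = \int_X f_A[\LL_x,x]\,d\mu(x)$, which by Proposition~\ref{law_to_mu} (applied to the Borel function $g = f_A$ on $\Gr$) equals $\int_\Gr f_A\,d\Psi(\GG) = \vec{\Psi(\GG)}(A)$. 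Since $A$ was arbitrary, the two measures agree.

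With the bridging identity in hand, the theorem follows by chasing naturality of $\iota$ through $\vec q$. The essential observation is that $\vec q$ intertwines the two involutions named $\iota$, i.e.\ $\iota \circ \vec q = \vec q \circ \iota$ on $S_\LL$; equivalently, $\vec q^{-1}(\iota(A)) = \iota(\vec q^{-1}(A))$, which is exactly the content $B_{\iota(A)} = \iota(B_A)$ of Lemma~\ref{iota_preserves_edge_facts}. Hence $\vec q_\ast$ commutes with $\iota_\ast$: for any measure $\nu$ on $S_\LL$, $\iota_\ast(\vec q_\ast(\nu)) = \vec q_\ast(\iota_\ast(\nu))$. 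Applying this to $\nu = \vec\mu$ and using $\vec q_\ast(\vec\mu) = \vec{\Psi(\GG)}$ gives
\[
\iota_\ast\bigl(\vec{\Psi(\GG)}\bigr) = \vec q_\ast\bigl(\iota_\ast(\vec\mu)\bigr).
\]
Now if $\mu$ is involution invariant, then $\iota_\ast(\vec\mu) = \vec\mu$, so the right side is $\vec q_\ast(\vec\mu) = \vec{\Psi(\GG)}$, proving $\Psi(\GG)$ is involution invariant.

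For the converse I would need a little more, since $\vec q_\ast$ need not be injective on measures in general; the honest route is to show that $\vec\mu$ and $\iota_\ast(\vec\mu)$ are determined, through the same formula, by how they integrate functions of the form $(x,y) \mapsto \chi_A[\LL_x,x,y]$. Reversing the computation above, $\iota_\ast(\vec\mu)(B_A) = \vec\mu(\iota(B_A)) = \vec\mu(B_{\iota(A)}) = \vec{\Psi(\GG)}(\iota(A)) = \iota_\ast(\vec{\Psi(\GG)})(A)$, so assuming $\Psi(\GG)$ is involution invariant yields $\iota_\ast(\vec\mu)(B_A) = \vec{\Psi(\GG)}(A) = \vec\mu(B_A)$ for every Borel $A \subseteq \vec\Gr$; since every Borel subset of $S_\LL$ is of the form $B_A$ (taking $A = A_B$, again by Proposition~\ref{directed_edge_facts}), this forces $\iota_\ast(\vec\mu) = \vec\mu$. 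I expect the main obstacle to be purely bookkeeping: checking measurability of $\vec q$ and of the sets $B_A$, $A_B$, and being careful that the two distinct maps written $\iota$ are consistently tracked so that Lemma~\ref{iota_preserves_edge_facts} is invoked with the correct one on each side. No single step is deep; the content is the identity $\vec q_\ast(\vec\mu) = \vec{\Psi(\GG)}$ and the intertwining $\iota \circ \vec q = \vec q \circ \iota$.
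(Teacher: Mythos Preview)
Your proposal is correct and follows essentially the same route as the paper: the paper computes exactly the identities $\vec\mu(B_A)=\vec{\Psi(\GG)}(A)$ and $\vec\mu(B)=\vec{\Psi(\GG)}(A_B)$ (and their $\iota$-twisted versions) by combining Proposition~\ref{directed_edge_facts}, Lemma~\ref{vec_mu_is_int_sum}, Lemma~\ref{iota_preserves_edge_facts}, and Proposition~\ref{law_to_mu}, which is precisely your bridging identity $\vec q_\ast(\vec\mu)=\vec{\Psi(\GG)}$ together with the intertwining $\iota\circ\vec q=\vec q\circ\iota$, just without naming the map $\vec q$. Your categorical packaging is a bit cleaner, but the content, the lemmas invoked, and the step in the converse direction (reducing to sets of the form $B_A$ via $A=A_B$) are identical to the paper's argument.
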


\begin{proof}
Suppose that $\Psi(\GG)$ is involution invariant. Let $B$ be a Borel subset of $S_\LL$. Proposition \ref{directed_edge_facts}, Lemma \ref{vec_mu_is_int_sum}, and Lemma \ref{iota_preserves_edge_facts} tell us that
\[
\vec\mu(\iota(B)) = \int_\Gr \sum_{y \in N_\LL(x)} \chi_{\iota(A_B)}[\LL_x,x,y] ~d\Psi(\GG)[\LL_x,x],
\]
and we know that the right-hand side is equal to
\[
\int_\Gr |\{y \in N_\LL(x) ~:~ [\LL_x,x,y] \in \iota(A_B)\}| ~d\Psi(\GG)[\LL_x,x] = \vec\Psi(\GG)(\iota(A_B)),
\]
which means $\iota_\ast(\vec\mu)(B) = \vec\Psi(\GG)(\iota(A_B))$. A similar argument shows that $\vec\mu(B) = \vec\Psi(\GG)(A_B)$. Since $\Psi(\GG)$ is involution invariant, we see that $\iota_\ast(\vec\mu) = \vec\mu$.

Conversely, assume that $\mu$ is involution invariant. Let $A$ be a Borel subset of $\vec\Gr$. By Proposition \ref{law_to_mu},
\[
\vec\Psi(\GG)(\iota(A)) = \int_X f_{\iota(A)}[\LL_x,x] ~d\mu(x),
\]
and the right-hand side is equal to $\vec\mu(\iota(B_A))$ using Proposition \ref{directed_edge_facts}, Lemma \ref{vec_mu_is_int_sum}, and Lemma \ref{iota_preserves_edge_facts}. That is, $\vec\Psi(\GG)(\iota(A)) = \vec\mu(\iota(B_A))$. Analogously, $\vec\Psi(\GG)(A) = \vec\mu(B_A)$. Then $\iota_\ast(\vec\Psi(\GG)) = \vec\Psi(\GG)$ because $\mu$ is involution invariant.
\end{proof}

The question that remains is whether a measure $\mu$ from some measurable graphing is \emph{always} involution invariant. In fact, the answer to this question is affirmative. However, to prove this result, we consider a more general situation.

\begin{defn}
Let $X$ be a Borel space; let $E$ be a countable Borel equivalence relation on $X$. A \emph{general graphing} is a tuple $\GG = (X,\Gamma,\mu)$ where $\Gamma \subseteq E$ is an antireflexive and symmetric Borel relation.
\end{defn}

Intimately related to this type of graphing is the concept of invariance under equivalence relations, defined below, which is discussed more thoroughly in a set of lecture notes by Alexander Kechris and Benjamin Miller \cite{tioe}.

\begin{defn}
Let $X$ be a Borel space; let $E$ be a countable Borel equivalence relation on $X$. A measure $\mu$ on $X$ is \emph{$E$-invariant} if for all Borel bijections $f : A \to B$ where $A$ and $B$ are Borel subsets of $X$ and $\gr(f) \subseteq E$, we have $\mu(A) = \mu(B)$.
\end{defn}

Using the notation of Kechris and Miller, define the measures $M$ and $M'$ as follows:
\[
M(A) = \int_X |\{y \in X ~:~ (x,y) \in A\}| ~d\mu(x)
\]
and
\[
M'(A) = \int_X |\{y \in X ~:~ (y,x) \in A\}| ~d\mu(x)
\]
for all Borel subsets $A$ of $E$.

\begin{prop}
If $M = M'$, then $\mu$ is $E$-invariant.
\end{prop}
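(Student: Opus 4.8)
The plan is to deduce $E$-invariance from the equality $M = M'$ by testing these measures on the graph $\gr(f)$ of an arbitrary Borel bijection $f : A \to B$ with $\gr(f) \subseteq E$. First I would observe that since $f$ is a function, each $x \in X$ has at most one $y$ with $(x,y) \in \gr(f)$, namely $y = f(x)$ when $x \in A$ and none otherwise; hence the inner cardinality $|\{y \in X : (x,y) \in \gr(f)\}|$ equals $\chi_A(x)$. Plugging this into the definition of $M$ gives
\[
M(\gr(f)) = \int_X \chi_A(x) ~d\mu(x) = \mu(A).
\]

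Next I would compute $M'(\gr(f))$. Since $f$ is a bijection onto $B$, each $x \in X$ has at most one $y$ with $(y,x) \in \gr(f)$: namely $y = f^{-1}(x)$ when $x \in B$, and none otherwise (if $(y,x) \in \gr(f)$ then $x = f(y) \in B$, and $y$ is determined by injectivity of $f$). Therefore $|\{y \in X : (y,x) \in \gr(f)\}| = \chi_B(x)$, and
\[
M'(\gr(f)) = \int_X \chi_B(x) ~d\mu(x) = \mu(B).
\]
Combining the two computations with the hypothesis $M = M'$ yields $\mu(A) = M(\gr(f)) = M'(\gr(f)) = \mu(B)$, which is exactly the definition of $E$-invariance.

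The only point requiring care — and the closest thing to an obstacle — is the measurability bookkeeping: one must check that $\gr(f)$ is a Borel subset of $E$ (so that $M$ and $M'$ may be evaluated on it) and that $\chi_A$, $\chi_B$ are measurable so the integrals make sense. Both are immediate: $A, B$ are Borel by hypothesis, $f$ is a Borel bijection so $\gr(f)$ is Borel in $X \times X$, and $\gr(f) \subseteq E$ is assumed. I would also remark that the fibre-counting identities $|\{y : (x,y) \in \gr(f)\}| = \chi_A(x)$ and $|\{y : (y,x) \in \gr(f)\}| = \chi_B(x)$ hold pointwise for every $x \in X$, so no almost-everywhere caveat is needed before integrating. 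With these observations in place the argument is a two-line substitution, so the write-up should be short.
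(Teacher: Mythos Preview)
Your proof is correct and follows essentially the same approach as the paper: both evaluate $M$ and $M'$ on $\gr(f)$, use that $f$ is a function to get $M(\gr(f)) = \mu(A)$ and that $f$ is a bijection onto $B$ to get $M'(\gr(f)) = \mu(B)$, and then invoke $M = M'$. The only cosmetic difference is that you phrase the fibre counts as $\chi_A$ and $\chi_B$ integrated over $X$, whereas the paper writes them as the constant $1$ integrated over $A$ and $f(A)$ respectively.
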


\begin{proof}
To show that $\mu$ is $E$-invariant, let $f : A \to B$ be a Borel bijection for some Borel subsets $A$ and $B$ of $X$. Note that $B = f(A)$ because $f$ is a bijection. Suppose that $\gr(f) \subseteq E$ where $\gr(f)$ is Borel. Then
\[
M(\gr(f)) = \int_A |\{y \in X ~:~ y = f(x)\}| ~d\mu(x) = \int_A |\{f(x)\}| ~d\mu(x) = \mu(A)
\]
and
\[
M'(\gr(f)) = \int_{f(A)} |\{y \in A ~:~ x = f(y)\}| ~d\mu(x) = \int_{f(A)} |\{f^{-1}(x)\}| ~d\mu(x) = \mu(f(A))
\]
are equal because $M = M'$ by assumption. That is, $\mu(A) = \mu(f(A)) = \mu(B)$.
\end{proof}

Kechris and Miller prove that the converse is also true \cite[p.\ 57]{tioe}, which leads to the following corollary.

\begin{cor}\label{E_inv_iff_M_eq_M_prime}
The measure $\mu$ is $E$-invariant if and only if $M = M'$.
\end{cor}

Now if the reader recalls, our definition of measurable graphing provides us with an equivalence relation, and this is precisely what we need to use Corollary \ref{E_inv_iff_M_eq_M_prime}.

\begin{theo}
Let $(X,\mu)$ be a measure space. If $\GG = (X,i_1,i_2,\ldots,i_k,\mu)$ is a measurable graphing, then $\mu$ is $\sim_\GG$-invariant.
\end{theo}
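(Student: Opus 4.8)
The plan is to use Corollary \ref{E_inv_iff_M_eq_M_prime}, which reduces $\sim_\GG$-invariance of $\mu$ to verifying that the two measures $M$ and $M'$ on $\sim_\GG$ coincide. So the goal becomes showing $M(A) = M'(A)$ for every Borel subset $A$ of $\sim_\GG$.

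First I would unwind the definitions of $M$ and $M'$. By the formulas preceding Corollary \ref{E_inv_iff_M_eq_M_prime},
\[
M(A) = \int_X |\{y \in X ~:~ (x,y) \in A\}| ~d\mu(x) \quad\text{and}\quad M'(A) = \int_X |\{y \in X ~:~ (y,x) \in A\}| ~d\mu(x).
\]
The key structural fact about a measurable graphing is that $\sim_\GG$ is generated by the finitely many measure-preserving involutions $i_1, \dots, i_k$: if $(x,y) \in \sim_\GG$, then $y$ is reached from $x$ by a finite word in the $i_j$'s. I would want to decompose the counting function over these generators. The cleanest route is to write, for each $j$, the set $A_j = \{(x,y) \in A ~:~ y = i_j(x)\}$; since every pair in $\sim_\GG$ is realized by some generator (in fact $\sim_\GG = \bigcup_j \gr(i_j)$ as a relation on each equivalence class up to the generated relation — more carefully, $\{(x,y) : \exists j,\ y = i_j(x)\}$ may be a proper subset of $\sim_\GG$, so one should not claim this), and here is where care is needed. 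The honest approach: it suffices to check $E$-invariance directly, or to restrict attention to the subrelation $\Gamma_0 = \{(x,y) : \exists j,\ i_j(x) = y\}$ and invoke that $E$-invariance only requires testing on Borel bijections $f$ with $\gr(f) \subseteq \sim_\GG$, and such bijections decompose (by Lusin--Novikov / the generators) into countably many pieces each agreeing with some $i_j$ on a Borel set.

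Concretely, I would argue as follows. Let $f : A \to B$ be a Borel bijection with $\gr(f) \subseteq \sim_\GG$. Because $\sim_\GG$ is the equivalence relation generated by $i_1, \dots, i_k$, standard descriptive-set-theoretic facts (the generators give a countable group action, or at least a countable collection of partial Borel automorphisms whose graphs cover $\sim_\GG$) let me partition $A$ into countably many Borel pieces $A_m$ such that $f|_{A_m} = w_m$ for some fixed finite word $w_m = i_{j_1} \circ \cdots \circ i_{j_\ell}$ in the generators. Each such word is a composition of measure-preserving Borel involutions, hence is itself measure-preserving, so $\mu(f(A_m)) = \mu(w_m(A_m)) = \mu(A_m)$. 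Summing over $m$ using countable additivity gives $\mu(B) = \mu(f(A)) = \sum_m \mu(f(A_m)) = \sum_m \mu(A_m) = \mu(A)$, which is exactly $\sim_\GG$-invariance. In fact this argument bypasses $M$ and $M'$ altogether and proves the theorem directly from the definition; I would present it this way, and perhaps remark at the end that it also shows $M = M'$ via Corollary \ref{E_inv_iff_M_eq_M_prime}.

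The main obstacle I anticipate is the decomposition step — rigorously producing the Borel partition of $A$ on which $f$ agrees with a word in the generators. This requires the Lusin--Novikov uniformization theorem (or the fact that a countable Borel equivalence relation is generated by a countable group of Borel automorphisms, Feldman--Moore) to know that $\gr(f)$, sitting inside the countable relation $\sim_\GG$, can be sliced into countably many graphs of the generating maps. Everything downstream — measure-preservation of compositions, countable additivity — is routine. If I wanted to stay closer to the paper's elementary style and avoid quoting Feldman--Moore, I could instead only verify $M = M'$ on sets of the form $A \cap \gr(i_j)$ and argue that these generate, but that runs into the same issue that $\bigcup_j \gr(i_j)$ need not exhaust $\sim_\GG$; so invoking the standard generation theorem seems unavoidable and is the honest place to locate the one nontrivial input.
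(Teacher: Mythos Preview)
Your proposal is correct and lands on essentially the same idea as the paper: the equivalence relation $\sim_\GG$ is the orbit relation of the countable group $\Gamma = \langle i_1,\dots,i_k\rangle$ acting by measure-preserving Borel automorphisms, and $\Gamma$-invariance of $\mu$ forces $\sim_\GG$-invariance. The paper packages this more economically: it simply names $\Gamma$, observes $E = E_\Gamma^X$ and that $\mu$ is $\Gamma$-invariant (trivial, since each $i_j$ preserves $\mu$), and then quotes the Kechris--Miller proposition that $\Gamma$-invariance implies $E$-invariance. Your direct argument---partitioning the domain of a Borel bijection $f$ with $\gr(f)\subseteq \sim_\GG$ into the Borel sets $\{x : f(x) = w(x)\}$ as $w$ ranges over words in the generators---is precisely the content of that cited proposition, so you are reproving what the paper outsources. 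One correction: you do not need Lusin--Novikov or Feldman--Moore here, because the countable group $\Gamma$ is already explicitly in hand; the sets $A_w = \{x \in A : f(x) = w(x)\}$ are Borel and cover $A$ since $\sim_\GG = E_\Gamma^X$, and a routine disjointification finishes the partition. Your initial detour through $M = M'$ is unnecessary for this theorem (the paper only invokes $M = M'$ afterward, in the corollary).
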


\begin{proof}
Let $\Gamma = \langle i_1,i_2,\ldots,i_k \rangle$ be the free group generated by the involutions. Denote by $E$ the equivalence relation $\sim_\GG$. The countable group $\Gamma$ acts on $X$ in a Borel fashion as follows: $\gamma \cdot x = \gamma(x)$ for all $x \in X$ and $\gamma \in \Gamma$. Note that $E = E_\Gamma^X$ where
\[
(x,y) \in E_\Gamma^X ~~\Leftrightarrow~~ \exists \gamma \in \Gamma ~~ \gamma \cdot x = y
\]
because any $\gamma \in \Gamma$ can be written as the composition of the generators $i_1,i_2,\ldots,i_k$.

Furthermore, $\mu$ is $\Gamma$-invariant: if $A$ is a Borel subset of $X$ and $\gamma \in \Gamma$, then
\[
\mu(\gamma(A)) = \mu(\gamma \cdot A) = \mu(A)
\]
because $(i_j)_\ast(\mu) = \mu$ and again $\gamma$ is a composition of $i_1,i_2,\ldots,i_k$. Using a proposition from the lectures notes by Kechris and Miller \cite[p.\ 57]{tioe}, we see that $\mu$ is $E$-invariant.
\end{proof}

\begin{cor}
The law of a measurable graphing $\GG = (X,i_1,i_2,\ldots,i_k,\mu)$ is unimodular.
\end{cor}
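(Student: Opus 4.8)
The plan is to chain together the three main structural results already in place. First I would apply the theorem immediately preceding this corollary to the graphing $\GG$ to conclude that the base measure $\mu$ is $\sim_\GG$-invariant; in the language of the general-graphing subsection this says that $\mu$ is $E$-invariant for $E = \sim_\GG$. Then Corollary \ref{E_inv_iff_M_eq_M_prime} converts this into the equality $M = M'$ of the two auxiliary measures on all Borel subsets of $E$.

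Next I would reconcile this with the notion of involution invariance of $\mu$ appearing in Theorem \ref{law_is_inv_inv_iff_mu_is}. The relation $S_\LL = \bigcup_{j=1}^k \gr(i_j)$ is a Borel subset of $E$, and for a Borel subset $B$ of $S_\LL$ the membership $(x,y) \in B$ already forces $y \in N_\LL(x)$; hence $\vec\mu(B) = M(B)$, and since $\iota(B) = \{(y,x) ~:~ (x,y) \in B\}$ one also gets $\iota_\ast(\vec\mu)(B) = \vec\mu(\iota(B)) = M'(B)$. As $M$ and $M'$ agree on every Borel subset of $E$, in particular on those contained in $S_\LL$, we obtain $\iota_\ast(\vec\mu) = \vec\mu$, i.e.\ $\mu$ is involution invariant in the sense of the definition preceding this subsection.

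Finally, Theorem \ref{law_is_inv_inv_iff_mu_is} promotes the involution invariance of $\mu$ to involution invariance of $\Psi(\GG)$, and Theorem \ref{unimodular_iff_inv_inv} then yields that $\Psi(\GG)$ is unimodular, which is the claim.

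I expect the only step requiring any care to be the middle one: matching the Kechris--Miller bookkeeping (the measures $M$, $M'$ defined over all of $E$) with the graphing-specific bookkeeping (the measure $\vec\mu$ and the involution $\iota$ defined over $S_\LL$). This amounts to observing that restricting the $E$-level identities to the Borel subsets of $S_\LL$ recovers exactly the $\iota_\ast(\vec\mu) = \vec\mu$ identity; everything else is a direct invocation of the quoted results.
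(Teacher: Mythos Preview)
Your proposal is correct and follows essentially the same route as the paper: the paper's proof is the terse two-line version of your argument, noting that $M\vert_{S_\LL} = \vec\mu$ and $M'\vert_{S_\LL} = \iota_\ast(\vec\mu)$ (your middle step), then invoking Theorem~\ref{law_is_inv_inv_iff_mu_is}. You have simply made explicit the uses of the preceding theorem, Corollary~\ref{E_inv_iff_M_eq_M_prime}, and Theorem~\ref{unimodular_iff_inv_inv} that the paper leaves implicit.
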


\begin{proof}
Note that $M\vert_{S_\LL} = \vec\mu$ and $M'\vert_{S_\LL} = \iota_\ast(\vec\mu)$, so $\iota_\ast(\vec\mu) = \vec\mu$. Theorem \ref{law_is_inv_inv_iff_mu_is} implies that the law $\Psi(\GG)$ of the measurable graphing $\GG$ is unimodular.
\end{proof}
%
%
\section{Open problems}

The purpose of this section is to acquaint the reader with several interesting questions that have yet to be resolved.

David Aldous and Russell Lyons \cite{pourn} asked the following in 2007, and it remains, in this author's eyes, one of the most important questions listed here.

\begin{oq}\label{uni_is_weak_limit}
Is every unimodular measure the weak limit of a sequence of laws of finite graphs?
\end{oq}

A related but weaker question is obtained by removing the finiteness condition.

\begin{oq}
Is every unimodular measure the weak limit of a sequence of laws of measurable graphings?
\end{oq}

The questions that follow, if true, combine to establish an affirmative answer to Open Question \ref{uni_is_weak_limit}.

\begin{oq}
Is every unimodular measure the law of some measurable graphing?
\end{oq}

\begin{oq}
Is the law of a measurable graphing the weak limit of a sequence of laws of finite graphs?
\end{oq}

We would also like to link the notion of unimodularity with that of sofic groups. Such groups were introduced by Mikhael Gromov and Benjamin Weiss. We refer the reader to a survey by Vladimir Pestov of the known and unknown results \cite{hasgabg}.

Recall that a Cayley graph of a group $\Gamma$ is the pair
\[
\Cay(\Gamma,S) = (\Gamma,\{(x,sx) \in \Gamma \times \Gamma ~:~ s \in S\})
\]
where $S$ is a set of generators of $\Gamma$.

\begin{defn}
A finitely generated group $\Gamma$ is \emph{sofic} if it has a finite symmetric set of generators $S$ such that for all positive real numbers $\e$ and $r \in \NN$, there is a finite directed graph $G = (V,E)$ edge-labelled by $S$, which has a finite subset of vertices $V_0 \subseteq V$ satisfying
\begin{enumerate}
\item[(i)] $\forall v \in V_0$, $B_G(v,r)$ is edge-labelled isomorphic to $B_{\Cay(\Gamma,S)}(1_\Gamma,r)$, and

\item[(ii)] $|V_0| \geq (1 - \e)|V|$.
\end{enumerate}
\end{defn}

\begin{oq}
Is a group sofic if and only if the Dirac measure on its Cayley graph is the weak limit of a sequence of laws of finite graphs?
\end{oq}

If we deviate from the general setting of rooted graphs introduced in this article, we may also consider expander graphs as the objects of study, as well as automorphism groups of graphs.
\nocite{*}
\bibliography{graphings_and_unimodularity}
\end{document}